\newtheorem{thm}{Theorem}[section]
\newtheorem{cor}[thm]{Corollary}
\newtheorem{lem}[thm]{Lemma}
\newtheorem{defn}[thm]{Definition}
\newtheorem{Condition}[thm]{Condition}
\newcommand{\Zset}{\mathbb{Z}}
\newcommand{\Cset}{\mathbb{C}}
\def\top{{\rm top}}
\newcommand{\ab}{{\rm ab}}
\def\fo{{\mathfrak o}}
\def\fU{{\mathfrak U}}
\def\PPhi{{\Phi}}
\def\cB{{\mathcal B}}
\def\Z{{\rm Z}}
\def\im{{\rm im\,}}
\def\H{{\rm H}}
\def\HP{{\rm HP}}
\def\SL{{\rm SL}}
\def\SO{{\rm SO}}
\def\GL{{\rm GL}}
\def\der{{\rm der}}
\def\Hom{{\rm Hom}}
\def\top{{\rm top}}
\def\unr{{\rm unr}}
\def\en{{\rm en}}
\def\aff{{\rm aff}}
\def\bW{{\mathbf W}}
\def\der{{\rm der}}
\def\Irr{{\mathbf {Irr}}}
\def\aff{{\rm a}}
\def\cR{{\mathcal R}}
\def\cG{{\mathcal G}}
\def\cH{{\mathcal H}}
\def\cO{{\mathcal O}}
\def\cT{{\mathcal T}}
\def\cW{{\mathcal W}}
\def\fs{{\mathfrak s}}
\def\fii{{\mathfrak i}}
\def\aff{{\rm aff}}
\def\q{{/\!/}}
\def\ab{{\rm ab}}
\def\der{{\rm der}}
\def\PPhi_F{{\rm Frob}}
\def\cpt{{\rm cpt}}
\newcommand{\Q}{\mathbb Q}
\newcommand{\R}{\mathbb R}
\newcommand{\C}{\mathbb C}
\newcommand{\matje}[4]{\left(\begin{smallmatrix} #1 & #2 \\ 
#3 & #4 \end{smallmatrix}\right)}
\def\lexp#1#2{{\kern\scriptspace\vphantom{#2}^{#1}\kern-\scriptspace#2}}
\begin{document}
\title[Geometric structure]{Geometric structure for the principal series 
of a split reductive $p$-adic group with connected centre}

\author[A.-M. Aubert]{Anne-Marie Aubert}
\address{I.M.J.-PRG, U.M.R. 7586 du C.N.R.S., U.P.M.C., Paris, France}
\email{anne-marie.aubert@imj-prg.fr}
\author[P. Baum]{Paul Baum}
\address{Mathematics Department, Pennsylvania State University,  University Park, PA 16802, USA}
\email{baum@math.psu.edu}
\thanks{The second author was partially supported by NSF grant DMS-0701184}
\author[R. Plymen]{Roger Plymen}
\address{School of Mathematics, Southampton University, Southampton SO17 1BJ,  England 
\emph{and} School of Mathematics, Manchester University,
Manchester M13 9PL, England}
\email{r.j.plymen@soton.ac.uk \quad plymen@manchester.ac.uk}
\author[M. Solleveld]{Maarten Solleveld}
\address{Radboud Universiteit Nijmegen, Heyendaalseweg 135, 6525AJ Nijmegen, the Netherlands}
\email{m.solleveld@science.ru.nl}

\date{\today}
\subjclass[2010]{20G05, 22E50}
\keywords{reductive $p$-adic group, representation theory, 
geometric structure, local Langlands conjecture}
\maketitle

\begin{abstract}  Let $\cG$ be a split reductive $p$-adic group with connected centre.   
We show that each Bernstein block in the principal series of $\cG$ admits a definite geometric 
structure, namely that of an extended quotient.   For the Iwahori-spherical block, this extended 
quotient has the form $T\q W$ where $T$ is a maximal torus in the Langlands dual group of $\cG$ 
and  $W$ is the Weyl group of $\cG$.   
\end{abstract}

\tableofcontents

\section{Introduction}

Let $\cG$ be a split reductive $p$-adic group with connected centre,  \
and let $G = \cG^\vee$ denote the Langlands dual group.   Then $G$ is a complex reductive group.
Let $T$ be a maximal torus in $G$ and let $W$ be the common Weyl group of $\cG$ and $G$.   
We can form the quotient variety
\[T/W
\]
and, familiar from noncommutative geometry \cite[p.77]{K}, the \emph{noncommutative quotient algebra}
\[
\cO(T) \rtimes W .
\]
Within periodic cyclic homology (a noncommutative version of de Rham theory) 
there is a canonical isomorphism
\[
\HP_*(\cO(T) \rtimes W) \simeq \H^*(T\q W ; \C)
\]
where 
\[
T\q W
\]
 denotes the \emph{extended quotient} of $T$ by $W$, see \S \ref{sec:extquot}.   
In this sense, the extended quotient $T\q W$, a complex algebraic variety,  
is a more concrete version of the noncommutative quotient algebra $\cO(T) \rtimes W$.   

Returning to the $p$-adic group $\cG$, let $\Irr(\cG)^{\fii}$ denote the subset of the smooth dual 
$\Irr(\cG)$ comprising all the irreducible smooth Iwahori-spherical representations of $\cG$.
We prove in this article that there is a continuous bijective map, satisfying several 
constraints, as follows:
\[
T\q W \simeq \Irr(\cG)^{\fii}.
\]
We note that there is nothing in the classical representation theory of $\cG$ 
to indicate that $\Irr(\cG)^{\fii}$ admits such a geometric structure.
Nevertheless, such a structure was conjectured by the present authors in \cite{ABPS1}, 
and so this article is a confirmation of that conjecture, for the single point $\fii$ 
in the Bernstein spectrum  of $\cG$.      We prove, more generally, that,
subject to constraints itemized in \cite{ABPS1}, and subject to the Condition \ref{CC} 
on the residual characteristic, there is a continuous bijective map
\[
T^\fs \q W^\fs \simeq \Irr(\cG)^\fs
\]
for each point $\fs$ in the Bernstein spectrum for the principal series of $\cG$, 
see Theorem \ref{split}. Here, $T^\fs$ and $W^\fs$ are the complex torus 
and the finite group attached to $\fs$.   
This, too, is a confirmation of the geometric conjecture in \cite{ABPS1}.

Let $\bW_F$ denote the Weil group of $F$.   
A Langlands parameter $\Phi$ for the principal series of 
$\cG$ 
should have
$\Phi (\mathbf{W}_F)$ contained in a maximal torus of $G$. In particular, it should suffice to consider 
parameters $\Phi$ such that
$\Phi \big|_{\mathbf{W}_F}$ factors through $\mathbf{W}_F^{\ab} \cong F^\times$,  that is, such that $\Phi$ factors as follows:
\[
\Phi \colon\mathbf{W}_F\times \SL_2(\C) \to F^{\times} \times \SL_2(\C) \to G.
\]
Such a parameter is \emph{enhanced} in the following way.   Let $\rho$ be an irreducible representation 
of the component group of the centralizer of the image of $\Phi$:
\[
\rho \in \Irr \, \pi_0 ( Z_G(\im \, \Phi)) .
\]
The pair $(\Phi, \rho)$ will be called an \emph{enhanced Langlands parameter}.   
   
We rely on Reeder's classification of the constituents of a given principal series 
representation of $\cG$, see \cite[Theorem 1, p.101-102]{Reed}.    
Reeder's theorem amounts to a local Langlands correspondence for the principal series 
of $\cG$.    Reeder uses only enhanced Langlands parameters with a particular geometric origin, 
namely those which occur in the homology of a certain variety of Borel subgroups of $G$.
This condition is essential, see, for example, the discussion, in \cite{ABP2},  
of the  Iwahori-spherical representations of the exceptional group $G_2$.

In Theorem \ref{compareParameters} we show how to replace the enhanced Langlands 
parameters of this kind, namely those of geometric origin,
by the \emph{affine Springer parameters} defined in \S \ref{par:affSpringer}.   
These affine Springer 
parameters are defined in terms of data attached to the complex reductive group $G$ -- 
in this sense, the affine Springer parameters are independent of the cardinality $q$ of the residue 
field of $F$.   The scene is now set for us to prove the first theorem of geometric structure, 
namely Theorem \ref{thm:bijection}, from which our main structure theorem, 
Theorem \ref{split} follows.

We also relate our basic structure theorem with $L$-packets in the principal series of $\cG$, 
see Theorem \ref{Lpackets}.  

An earlier, less precise version of our conjecture was formulated in \cite{ABP1}. 
That version was proven in \cite{Sol} for Bernstein components which are described nicely 
by affine Hecke algebras. These include the principal series of split groups 
(with possibly disconnected centre), symplectic 
and orthogonal groups and also inner forms of $\GL_n$. \\ 

\textbf{Acknowledgements.}
Thanks to Mark Reeder for drawing our attention to the article of Kato \cite{Kat}.   
We thank Joseph Bernstein, David Kazhdan, George Lusztig, and David Vogan for 
enlightening comments and discussions.

\section{Extended quotient}
\label{sec:extquot}
Let $\Gamma$ be a finite group acting on a complex affine variety $X$ 
by automorphisms,
\[ 
\Gamma \times X \to X.
\]
The quotient variety $X/\Gamma$ is obtained by collapsing each orbit to a point. 

For $x\in X ,\; \Gamma_x$ denotes the stabilizer group of $x$:
\[
\Gamma_x = \{\gamma\in \Gamma : \gamma x = x\}.
\]
Let $c(\Gamma_x)$ denote the set of conjugacy classes of  $\Gamma_x$. The extended quotient 
is obtained from $X / \Gamma$ by replacing the orbit of $x$ by $c(\Gamma_x)$. 
This is done as follows:\\

\noindent Set $\widetilde{X} = \{(\gamma, x) \in \Gamma \times X : \gamma x = x\}$.  
It is an affine variety and a subvariety of $\Gamma \times X$. 
The group $\Gamma$ acts on $\widetilde{X}$:
\begin{align*}
& \Gamma \times \widetilde{X} \to \widetilde{X}\\
& \alpha(\gamma, x) = (\alpha\gamma \alpha^{-1}, \alpha x), \quad\quad \alpha \in \Gamma,
\quad (\gamma, x) \in \widetilde{X}.
\end{align*}

\noindent The extended quotient, denoted $ X/\!/\Gamma $,  is $\widetilde{X}/\Gamma$. 
Thus the extended quotient  $ X/\!/\Gamma $ is the usual quotient for the action of 
$\Gamma$ on $\widetilde{X}$. 
The projection 
\[
\widetilde{X} \to X ,\;  (\gamma, x) \mapsto x
\]
is $\Gamma$-equivariant 
and so passes to quotient spaces to give a morphism of affine varieties
\[
\rho\colon  X/\!/\Gamma \to X/\Gamma.
\]   
This map will be referred to as the projection of the extended quotient 
onto the ordinary quotient. The inclusion 
\begin{align*}
& X \hookrightarrow \widetilde{X}\\
& x \mapsto (e,x)\qquad e=\text{identity element of }\Gamma
\end{align*}
is $\Gamma$-equivariant and so passes to quotient spaces to give an inclusion of affine 
varieties $X/\Gamma\hookrightarrow X/\!/\Gamma$. 

This article will be dominated by extended quotients of the form
$T\q W$ or, more generally, extended quotients of the form $T^\fs \q W^\fs$.

\section{The group $W^\fs$ as a Weyl group}
\label{sec:Lp}
Let $\cG$ be a connected reductive $p$-adic group over $F$, which is $F$-split and
has connected centre. Let $\mathcal T$ be a $F$-split maximal torus in $\cG$. Let
$G$, $T$ denote the Langlands dual groups of $\mathcal{G}$, $\mathcal{T}$. 
The principal series consists 
of all $\mathcal G$-representations that are obtained with parabolic induction
from characters of $\mathcal T$. 
We will suppose that the residual characteristic $p$ of $F$ satisfies the hypothesis 
in \cite[p.~379]{Roc}, for all reductive subgroups $H \subset G$ containing $T$:

\begin{Condition}\label{CC} 
If the root system $R (H,T)$ is irreducible, 
then the restriction on the residual characteristic $p$ of $F$ is as follows:
\begin{itemize}
\item for type $A_n  \quad p > n+1$
\item for types $B_n, C_n, D_n \quad p \neq 2$
\item for type $F_4 \quad p \neq 2,3$
\item for types $G_2, E_6 \quad p \neq 2,3,5$
\item for types $E_7, E_8 \quad p \neq 2,3,5,7.$
\end{itemize}
If $R (H,T)$ is reducible, one excludes primes attached to each of its 
irreducible factors.
\end{Condition}
Since $R (H,T)$ is a subset of $R (G,T) \cong R (\cG,\cT)^\vee$, 
these conditions are fulfilled when they hold for $R (\cG,\cT)$.

We denote the collection of all Bernstein components of $\cG$ of the form
$\fs=[\cT,\chi ]_\cG$ by $\mathfrak B (\cG,\cT)$ and call these the Bernstein
components in the principal series. The union 
\[
\Irr (\cG,\cT) := \bigcup_{\fs \in \mathfrak B (\cG,\cT)} \Irr (\cG )^\fs
\]
is by definition the set of all irreducible subquotients of principal series
representations of $\mathcal G$.

Choose a uniformizer $\varpi_F \in F$. There is a bijection $t \mapsto \nu$ between 
points in $T$ and unramified characters of $\mathcal{T}$, determined by the relation 
\[
\nu (\lambda(\varpi_F)) = \lambda (t) 
\]
where $\lambda \in X_* (\mathcal{T}) = X^* (T)$.   
The space $\Irr (\cT )^{[\cT ,\chi]_\cT}$ is in bijection with $T$ via 
$t \mapsto \nu \mapsto \chi \otimes \nu$. Hence Bernstein's torus $T^\fs$ is isomorphic
to $T$. However, because the isomorphism is not canonical and the action of the group
$W^\fs$ depends on it, we prefer to denote it $T^\fs$.

The uniformizer $\varpi_F$
gives rise to a group isomorphism $\fo_F^\times \times \mathbb Z \to F^\times$,
which sends $1 \in \mathbb Z$ to $\varpi_F$.
Let $\cT_0$ denote the maximal compact subgroup of $\cT$. As the latter is $F$-split,
\begin{equation}\label{eq:cT0}
\cT \cong F^\times \otimes_{\mathbb Z} X_* (\cT) \cong (\fo_F^\times \times \mathbb Z)
\otimes_{\mathbb Z} X_* (\cT) = \cT_0 \times X_* (\cT) .
\end{equation}
Because $\cW^G = W (\cG,\cT)$ does not act on $F^\times$, these isomorphisms are
$\cW^G$-equivariant if we endow the right hand side with the diagonal $\cW^G$-action.
Thus \eqref{eq:cT0} determines a $\cW^G$-equivariant isomorphism of character groups 
\begin{equation}\label{eq:split}
\Irr (\cT) \cong \Irr (\cT_0) \times \Irr (X_* (\cT)) = \Irr (\cT_0) \times X_{\unr}(\cT) .
\end{equation}

\begin{lem}\label{lem:cBernstein} 
Let $\chi$ be a character of $\cT$, and let  
\begin{align}\label{artin}
\fs = [\cT,\chi]_{\cG}
\end{align}
be the inertial class of the pair $(\cT,\chi)$.
Then $\fs$ determines, and is determined by, the $\cW^G$-orbit of a smooth morphism
\[
c^\fs \colon \fo_F^\times \to T.
\]
\end{lem}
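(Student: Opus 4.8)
The plan is to unwind the definition of the inertial class $\fs = [\cT,\chi]_\cG$ in terms of the decomposition \eqref{eq:split} and then extract the compact part as the desired morphism. First I would recall that for the Levi subgroup $\cT$ itself, two characters $\chi, \chi'$ of $\cT$ are inertially equivalent (as cuspidal data for $\cT$) precisely when they differ by an unramified character, i.e.\ $\chi' \in \chi \cdot X_{\unr}(\cT)$; and then the inertial class $[\cT,\chi]_\cG$ is the $\cW^G$-orbit of $[\cT,\chi]_\cT$. Using the $\cW^G$-equivariant splitting \eqref{eq:split}, write $\chi = \chi_0 \otimes \nu$ with $\chi_0 \in \Irr(\cT_0)$ and $\nu \in X_{\unr}(\cT)$. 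Since the unramified twist is exactly the ambiguity in the inertial class, $[\cT,\chi]_\cT$ is determined by and determines $\chi_0 \in \Irr(\cT_0)$ alone, and hence $\fs = [\cT,\chi]_\cG$ corresponds bijectively to the $\cW^G$-orbit of $\chi_0$.

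Next I would identify $\Irr(\cT_0)$ with the set of smooth morphisms $\fo_F^\times \to T$. Because $\cT$ is $F$-split, $\cT_0 \cong \fo_F^\times \otimes_{\ZZ} X_*(\cT)$, so
\[
\Irr(\cT_0) = \Hom(\fo_F^\times \otimes_{\ZZ} X_*(\cT), \CC^\times)
= \Hom(\fo_F^\times, \Hom(X_*(\cT),\CC^\times)) = \Hom(\fo_F^\times, T),
\]
where I use $X^*(T) = X_*(\cT)$ so that $T = \Hom(X_*(\cT),\CC^\times)$, and I restrict throughout to the smooth (i.e.\ continuous for the profinite topology on $\fo_F^\times$) homomorphisms. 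This adjunction is visibly $\cW^G$-equivariant, since $\cW^G$ acts on $X_*(\cT)$ and hence on $T$ in the standard way and trivially on $\fo_F^\times$. Under this identification the character $\chi_0$ becomes a smooth morphism $c^\fs \colon \fo_F^\times \to T$, well-defined up to the $\cW^G$-action, and conversely such a morphism recovers $\chi_0$ up to $\cW^G$ and therefore recovers $\fs$. Combining the two steps gives the claimed bijective correspondence between $\fs$ and the $\cW^G$-orbit of $c^\fs$.

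The only genuinely delicate point is the first step: one must be careful that the inertial equivalence relation on the $\cG$-side really does collapse to "same compact part up to $\cW^G$", with no further identifications coming from the $\cG$-action — in particular, that $W^\fs$, the group that will act on $T^\fs$, does not already enter here. This is handled by the standard description of Bernstein's inertial classes for a torus Levi together with the fact that the residual-characteristic Condition \ref{CC} guarantees, via \cite{Roc}, that the relevant structures behave as expected; the equivariance of \eqref{eq:split} is what makes the bookkeeping go through cleanly. Everything else — the adjunction computing $\Irr(\cT_0)$ and the tracking of the $\cW^G$-action — is routine once the splitting \eqref{eq:cT0}–\eqref{eq:split} is in hand.
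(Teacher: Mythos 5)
Your proof is correct and follows essentially the same route as the paper: the $\cW^G$-equivariant splitting \eqref{eq:split} combined with the tensor--hom adjunction identifying characters of $\cT_0$ with smooth homomorphisms $\fo_F^\times \to T$, plus the observation that $\cG$-conjugacy of characters of $\cT$ amounts to $\cW^G$-conjugacy. The only superfluous ingredient is your appeal to Condition \ref{CC} and \cite{Roc} at the end --- the lemma needs neither, since the collapse of the inertial relation to ``same compact part up to $\cW^G$'' is immediate from the equivariance of \eqref{eq:split}.
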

\begin{proof}
There is a natural isomorphism
\[
\Irr (\cT) = \Hom (F^\times \otimes_{\Zset} X_* (\cT),\C^\times) \cong
\Hom (F^\times ,\C^\times \otimes_\Zset X^* (\cT)) = \Hom (F^\times ,T) . 
\]
Together with \eqref{eq:split} we obtain isomorphisms
\begin{align*}
& \Irr (\cT_0) \cong \Hom (\fo_F^\times ,T) , \\
& X_{\unr}(\cT) \cong \Hom (\Zset ,T) = T . 
\end{align*}
Let $\hat \chi \in \Hom (F^\times ,T)$ be the image of $\chi$ under these isomorphisms. 
By the above the restriction of $\hat \chi$ to $\fo_F^\times$ is not disturbed by 
unramified twists, so we take that as $c^\fs$. Conversely, by \eqref{eq:split} $c^\fs$ 
determines $\chi$ up to unramified twists. Two elements of $\Irr (\cT)$ are 
$\cG$-conjugate if and only if they
are $\cW^G$-conjugate so, in view of \eqref{artin}, the $\cW^G$-orbit
of the $c^\fs$ contains the same amount of information as $\fs$.
\end{proof}

We define
\begin{equation}
H := Z_G(\im \, c^\fs).
\end{equation}
The following crucial result is due to Roche, see \cite[p. 394 -- 395]{Roc}.     
  
\begin{lem} \label{lem:Roche}  
The group  $H^\fs$ is connected, and the finite group $W^\fs$ is the Weyl group of $H^\fs$:
\[
W^\fs = \cW^{H^\fs}
\]
\end{lem}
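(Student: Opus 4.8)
The plan is to split the statement into the identity $W^\fs = \cW^{H^\fs}$, which follows formally from Lemma~\ref{lem:cBernstein}, and the connectedness of $H^\fs$, which is the real content and is where Condition~\ref{CC} is used.

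For the first part: by Lemma~\ref{lem:cBernstein} the class $\fs$ is the $\cW^G$-orbit of $c^\fs$, and Bernstein's group $W^\fs$ — the stabiliser in $\cW^G$ of the class of $\chi$ modulo unramified twists — is then $\Stab_{\cW^G}(c^\fs) = \{w \in \cW^G : w(s) = s \text{ for all } s \in \im c^\fs\}$, where $\cW^G = W(G,T)$ acts on $T \supseteq \im c^\fs$. Since $T$ is abelian, conjugation on $T$ by a representative $\dot w \in N_G(T)$ of $w$ does not depend on the choice of $\dot w$; so $w$ fixes $\im c^\fs$ pointwise exactly when $\dot w \in Z_G(\im c^\fs) = H^\fs$, whence $W^\fs = N_{H^\fs}(T)/T = \cW^{H^\fs}$. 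The same computation identifies $R(H^\fs,T)$, under $X^*(T) = X_*(\cT)$ and $R(G,T) = R(\cG,\cT)^\vee$, with $\{\alpha^\vee : \alpha \in R(\cG,\cT),\ \chi\circ\alpha^\vee \text{ trivial on } \fo_F^\times\}$ — Roche's subsystem $R_\chi$ — so that once $H^\fs$ is known to be connected, $\cW^{H^\fs}$ is the Weyl group $W(R_\chi)$ in the usual sense.

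For the connectedness I would peel $\im c^\fs$ apart using smoothness of $\chi$. Since $c^\fs$ factors through a finite quotient of $\fo_F^\times$, and $\fo_F^\times = \mu_{q-1} \times (1+\fp_F)$ with $\mu_{q-1}$ cyclic of order prime to $p$ and $1+\fp_F$ pro-$p$, the image has the shape $\im c^\fs = \langle t_0 \rangle \cdot P$, where $t_0 \in T$ is semisimple of order prime to $p$ and $P \subseteq T$ is a finite $p$-group. Because $\cG$ has connected centre, $G = \cG^\vee$ has simply connected derived group, so by Steinberg's connectedness theorem $G_1 := Z_G(t_0)$ is connected reductive, and it contains $T$. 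Now $H^\fs = Z_{G_1}(P)$, and I would centralise the elements of a generating set of $P$ one at a time inside $G_1$. The key observation is that if $H' \subseteq G$ is connected reductive with maximal torus $T$, then Condition~\ref{CC}, applied to $H'$ (which is one of the subgroups appearing there), forces $p \nmid |\pi_1(H'_{\mathrm{der}})|$: indeed $|\pi_1(H'_{\mathrm{der}})|$ divides a product of factors $n+1$ (for type-$A_n$ components of $R(H',T)$), $4$ (for $D_n$), $2$ (for $B_n$, $C_n$, $E_7$) and $3$ (for $E_6$), and Condition~\ref{CC} makes each of these coprime to $p$. Then the centraliser in $H'$ of an element of $p$-power order is again connected. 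Iterating, $H^\fs = Z_{G_1}(P)$ is connected, which together with the first part proves the lemma.

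The main obstacle I expect is the verification underlying this last step: that Condition~\ref{CC} indeed forces $p \nmid |\pi_1(H'_{\mathrm{der}})|$ for every connected reductive $T$-containing subgroup $H'$ that can occur — these being controlled, via the Borel--de Siebenthal procedure, by the subsystems of $R(G,T)$ and their iterated pseudo-Levis — and that this persists uniformly along the chain of centralisers of the $p$-elements of $P$. This is precisely the computation carried out by Roche in \cite[p.~394--395]{Roc}, where the exceptional types are where care is needed. Everything else — the identification $W^\fs = N_{H^\fs}(T)/T$, the decomposition of $\im c^\fs$, and the reduction to Steinberg's theorem — is formal.
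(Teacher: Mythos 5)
The paper offers no proof of this lemma --- it is quoted directly from Roche \cite[pp.~394--395]{Roc} --- and your argument is a correct reconstruction of precisely that argument: the identification $W^\fs=\Stab_{\cW^G}(c^\fs)=N_{H^\fs}(T)/T$ is formal given Lemma \ref{lem:cBernstein}, and connectedness follows by splitting $\im c^\fs$ into its prime-to-$p$ part (handled by Steinberg's theorem, since connectedness of $Z(\cG)$ makes $G_{\der}$ simply connected) and its $p$-part (handled by the coprimality of $p$ to the relevant fundamental groups/centres of simply connected covers, which is exactly what Condition \ref{CC} guarantees for every pseudo-Levi $H'\supseteq T$ arising in the chain of centralizers). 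So the proposal is correct and takes essentially the same route as the source the paper cites.
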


\section{Comparison of different parameters}
\label{sec:Borel}

\subsection{Varieties of Borel subgroups} 
We clarify some issues with different varieties of Borel subgroups and different
kinds of parameters arising from them.

Let $\mathbf{W}_F$ denote the Weil group of $F$, let $\mathbf{I}_F$ be the inertia
subgroup of $\mathbf{W}_F$. 
Let $\mathbf{W}_F^{\der}$ denote the closure of the commutator subgroup of 
$\mathbf{W}_F$, and write $\mathbf{W}_F^{\ab} = \mathbf{W}_F/\mathbf{W}^{\der}_F$. 
The group of units in $\mathfrak{o}_F$ will be denoted $\fo_F^\times$.

Next, we consider conjugacy classes in $G$ of continuous morphisms
\[
\Phi\colon \mathbf{W}_F\times \SL_2 (\Cset) \to G
\] 
which are rational on $\SL_2 (\Cset)$ and such that $\Phi(\mathbf{W}_F)$ 
consists of semisimple elements in $G$. 

Let $B_2$ be the upper triangular Borel subgroup in $\SL_2 (\Cset)$.
Let $\mathcal B^{\Phi (\mathbf{W}_F \times B_2)}$ denote the variety of Borel 
subgroups of $G$ containing $\Phi(\mathbf{W}_F \times B_2)$.
The variety $\mathcal B^{\Phi (\mathbf{W}_F \times B_2)}$ is non-empty if and 
only if $\Phi$ factors through $\mathbf W_F^{\ab}$, see \cite[\S 4.2]{Reed}.  
In that case, we view the domain of $\Phi$ to be $F^{\times} \times \SL_2 (\Cset)$: 
\[
\Phi\colon F^{\times} \times \SL_2 (\Cset) \to G.
\]
In Section \ref{subsec:enp} we show
how such a Langlands parameter $\Phi$ can be enhanced with a parameter $\rho$.

We start with the following data: a point $\fs = [\cT, \chi]_{\cG}$ and an $L$-parameter
\[
\Phi \colon F^{\times} \times \SL_2 (\C) \to G 
\]
for which
\[
\Phi |\fo^{\times}_F = c^\fs.
\]
This data creates the following items:
\begin{equation}\label{H}
\begin{aligned}
& t: = \Phi(\varpi_F, I),\\
& x := \Phi \left( 1, \matje{1}{1}{0}{1} \right) ,\\
& M: = \Z_H (t) .
\end{aligned}
\end{equation}
We note that $\Phi (\fo^{\times}_F) \subset \Z(H)$ and that $t$ commutes with 
$\Phi (\SL_2 (\C)) \subset M$. 

For $\alpha \in \Cset^{\times}$ we define the following matrix in $\SL_2 (\Cset)$:
\[
Y_{\alpha} = \matje{\alpha}{0}{0}{\alpha^{-1}} .
\]
For any $q^{1/2} \in \C^\times$ the element 
\begin{equation}\label{eq:S.12} 
t_q := t \Phi \big( Y_{q^{1/2}} \big) 
\end{equation} 
satisfies the familiar relation $t_q x t_q^{-1} = x^q$. Indeed 
\begin{equation}\label{eq:tqx} \begin{split}
t_q x t_q^{-1} & = t \Phi (Y_{q^{1/2}}) \Phi \matje{1}{1}{0}{1} 
\Phi (Y_{q^{1/2}}^{-1}) t^{-1} \\
& = t \Phi \big( Y_{q^{1/2}} \matje{1}{1}{0}{1} Y_{q^{1/2}}^{-1} \big) t^{-1} \\
& = t \Phi \matje{1}{q}{0}{1} t^{-1} = x^q . 
\end{split} \end{equation}
Notice that $\Phi (\fo^{\times}_F)$ lies in every Borel subgroup of $H$, because it
is contained in $\Z(H)$. We abbreviate $\Z_H (\Phi) = 
\Z_H (\im \Phi)$ and similarly for other groups.

\begin{lem}\label{inc}
The inclusion map 
$\Z_H (\Phi)  \to  \Z_H (t,x)$
is a  homotopy equivalence. 
\end{lem}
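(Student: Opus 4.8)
The plan is to show that the inclusion $\Z_H(\Phi) \hookrightarrow \Z_H(t,x)$ is a homotopy equivalence by exhibiting the larger group as the total space of a bundle over something built from the smaller one, or more directly by identifying $\Z_H(t,x)$ as a product (up to homotopy) of $\Z_H(\Phi)$ with a contractible factor. The key observation is that $\Phi$ is determined by the triple $(c^\fs, t, x)$ together with the remaining $\SL_2(\C)$-data, but since $\Phi(\fo_F^\times) \subset \Z(H)$ centralizes everything in $H$, the only real content of $\im\Phi$ beyond $t$ and $x$ is the full image of $\Phi|_{\SL_2(\C)}$. By the Jacobson--Morozov theorem, the unipotent element $x = \Phi\matje{1}{1}{0}{1}$ determines an $\fs\fl_2$-triple, and hence a homomorphism $\SL_2(\C) \to H$ (or rather to $M = \Z_H(t)$, since $t$ commutes with $\Phi(\SL_2(\C))$); all such homomorphisms extending a given $x$ are conjugate by $\Z_H(t,x)^\circ$, in fact by the unipotent radical of a suitable parabolic.

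First I would reduce to working inside $M = \Z_H(t)$: since $\Z_H(\Phi) = \Z_M(\Phi(\SL_2(\C)))$ and $\Z_H(t,x) = \Z_M(x)$, the claim is that $\Z_M(\Phi|_{\SL_2(\C)}) \hookrightarrow \Z_M(x)$ is a homotopy equivalence, where $\Phi|_{\SL_2(\C)}\colon \SL_2(\C)\to M$ is a fixed homomorphism with $\Phi\matje{1}{1}{0}{1} = x$. This is a standard fact in the theory of the Springer correspondence and nilpotent orbits: for a reductive group $M$ and a nilpotent/unipotent element $x$, the centralizer $\Z_M(x)$ has the same reductive part (up to conjugacy and homotopy) as the centralizer of the whole $\fs\fl_2$-triple through $x$, and the ``extra'' dimensions of $\Z_M(x)$ form a unipotent (hence contractible) group. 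Concretely, writing $Z_M(x) = \Z_M(\Phi|_{\SL_2})\ltimes U$ where $U$ is the unipotent radical (this is the Bala--Carter/Kostant picture), the retraction $\Z_M(x) \to \Z_M(\Phi|_{\SL_2})$ is a deformation retraction because $U$ is isomorphic as a variety to an affine space.

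The main step, then, is to justify the semidirect product decomposition $\Z_M(x) = \Z_M(\Phi|_{\SL_2(\C)}) \ltimes U$ with $U$ unipotent. One route: use the cocharacter $Y_\bullet$ of the $\fs\fl_2$-triple to define a grading on $\Lie(\Z_M(x))$; the centralizer of the triple is the degree-zero part, and Lemma of Jacobson--Morozov-type arguments (as in Carter's book, or Collingwood--McGovern) show that the positive-degree part integrates to the unipotent radical $U$ and that $\Z_M(x)$ retracts onto $\Z_M(\Phi|_{\SL_2})$ along the conjugation action of $Y_s$ as $s\to 0$ (or $\infty$). Alternatively, and perhaps cleaner for this paper, one can cite that a reductive group acting on a variety with a contractible (affine-space) fibre over a homogeneous base gives a homotopy equivalence, and identify $\Z_H(t,x)/\Z_H(\Phi)$ with such an affine space. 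I would also need to check the mild point that $\Z_H(\Phi)$ meets every component of $\Z_H(t,x)$, i.e.\ the inclusion is a bijection on $\pi_0$ — this follows from the conjugacy (under $\Z_H(t,x)^\circ$) of all $\SL_2$-homomorphisms through $x$, which again is Jacobson--Morozov.

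The hard part will be pinning down the precise structural statement about centralizers of $\fs\fl_2$-triples in the generality needed (arbitrary reductive $M$, possibly over $\C$ with no characteristic hypotheses needed since we are over $\C$) and making sure the deformation retraction is genuinely a $\Z_H(\Phi)$-equivariant or at least well-behaved one; but since the ambient groups here are complex reductive, the classical Jacobson--Morozov and Kostant theory applies without the delicate characteristic-$p$ caveats, so this should go through cleanly by citing Carter or Collingwood--McGovern. I would present the argument as: (1) reduce to $M$ and to $\SL_2$-data; (2) invoke Jacobson--Morozov to get an $\fs\fl_2$-triple and the semidirect decomposition $\Z_M(x) = \Z_M(\text{triple}) \ltimes U$; (3) observe $U \cong \Aset^N$ is contractible, giving the deformation retraction and hence the homotopy equivalence.
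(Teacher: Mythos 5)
Your proposal is correct and follows essentially the same route as the paper: both rest on the Levi decomposition $\Z(x) = \Z(\Phi(\SL_2(\C)))\,U$ of the centralizer of the unipotent element (Jacobson--Morozov/Kostant, cited in the paper as \cite[Prop.\ 3.7.23]{CG}) together with the contractibility of the unipotent factor. The only difference is cosmetic: the paper decomposes $\Z_H(x)$ inside the connected group $H$ and then intersects with $\Z_H(t)$ to get $\Z_H(t,x)=\Z_H(\Phi)\,\Z_{U_x}(t)$, whereas you first pass to $M=\Z_H(t)$ and decompose $\Z_M(x)$ there.
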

\begin{proof} 
Our proof depends on \cite[Prop. 3.7.23]{CG}. There is a Levi decomposition
\[
\Z_{H} (x) = \Z_{H} (\Phi (\SL_2 (\C))) U_x 
\]
where $\Z_{H} (\Phi (\SL_2 (\C))$ a maximal reductive subgroup of $\Z_H(x)$ and 
$U_x$ is the unipotent radical of $\Z_H(x)$. Therefore
\begin{equation}\label{eq:S.1}
\Z_{H} (t,x) = \Z_{H} (\Phi) \Z_{U_x}(t)  
\end{equation}
We note that $\Z_{U_x}(t) \subset U_x$ is contractible, because it is a unipotent complex group. 
It follows that
\begin{equation}\label{eq:S.10}
\Z_{H} (\Phi) \to \Z_{H} (t,x) 
\end{equation}
is a homotopy equivalence.
\end{proof}

If a group $A$ acts on a variety $X$, let $\cR(A, X)$ denote the set of irreducible
representations of $A$ appearing in the homology $H_*(X)$.

The variety of Borel subgroups of $G$ which contain $\Phi(\mathbf{W}_F \times B_2)$ 
will be denoted $\cB_G^{\Phi(\mathbf{W}_F \times B_2)}$ and the 
variety of Borel subgroups of $H$ containing $\{t,x\}$ will be denoted $\cB^{t,x}_H$.  

Lemma~\ref{inc} allows us to define
\[
A: = \pi_0( \Z_H (\Phi))  = \pi_0( \Z_H (t,x)).
\]

\begin{thm}\label{Rgroup}  
We have
\[
\cR(A, \cB^{\Phi(\mathbf{W}_F \times B_2)}) = \cR(A, \cB^{t,x}_H).
\]
\end{thm}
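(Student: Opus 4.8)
The plan is to reduce the claimed equality of sets of irreducible representations to an identification of the two varieties of Borel subgroups, compatibly with the relevant group actions. First I would observe that since $\Phi(\fo_F^\times) \subset \Z(H)$, a Borel subgroup $B$ of $G$ contains $\Phi(\mathbf{W}_F \times B_2)$ if and only if it contains $\Phi(F^\times) = \Phi(\fo_F^\times \times \varpi_F^{\Zset})$ together with $\Phi(B_2)$, and the central factor $\Phi(\fo_F^\times)$ imposes no condition inside $H$. Unwinding $\Phi(F^\times \times B_2)$: since $B_2$ is generated by the diagonal torus and the unipotent $\matje{1}{1}{0}{1}$, containing $\Phi(B_2)$ is equivalent to containing $x = \Phi\matje{1}{1}{0}{1}$ together with the image of the diagonal torus; and combined with $t = \Phi(\varpi_F,I)$ and the relation \eqref{eq:S.12}, the subgroup generated by $t$ and the image of $\Phi(B_2)$ equals (up to taking closures) the subgroup generated by $t$ and $x$. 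Here I would use that $\Phi$ is semisimple on $\mathbf{W}_F$ so that $t$ is semisimple, and that a Borel containing a semisimple element contains its entire diagonalizable part. The upshot is a canonical identification of varieties
\[
\cB_G^{\Phi(\mathbf{W}_F \times B_2)} \;=\; \cB_H^{t,x},
\]
where on the left one restricts attention to Borels of $G$ that automatically lie in $H$ (this is where connectedness of $H$, Lemma~\ref{lem:Roche}, is convenient, so that $\cB_H$ makes sense and injects into $\cB_G$).

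Next I would make the group actions match. Both $\Z_G(\Phi)$ and $\Z_H(t,x)$ act on the respective varieties by conjugation; by Lemma~\ref{inc} the inclusion $\Z_H(\Phi) \to \Z_H(t,x)$ is a homotopy equivalence, and $\Z_H(\Phi) = \Z_G(\Phi)$ because $H = \Z_G(\im c^\fs) = \Z_G(\Phi(\fo_F^\times))$ already contains $\Z_G(\Phi)$. So under the identification of varieties above, the $\Z_G(\Phi)$-action and the $\Z_H(t,x)$-action are literally the same action, hence induce the same action of the component group $A = \pi_0(\Z_H(\Phi)) = \pi_0(\Z_H(t,x))$ on homology $H_*(\cB)$. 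Since $\cR(A,-)$ depends only on the $A$-module $H_*(\cB)$, the two sets $\cR(A,\cB_G^{\Phi(\mathbf{W}_F \times B_2)})$ and $\cR(A,\cB_H^{t,x})$ coincide.

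The main obstacle, and the step deserving the most care, is the precise identification of the two varieties: one must check that \emph{every} Borel subgroup of $G$ containing $\Phi(\mathbf{W}_F \times B_2)$ actually lies in $H$ (so that passing between $\cB_G$ and $\cB_H$ loses nothing), and conversely that containing $\{t,x\}$ inside $H$ is equivalent to containing all of $\Phi(F^\times \times B_2)$. The forward direction uses that $\Phi(\fo_F^\times)$ is central in such a Borel's reductive quotient and that its centralizer is $H$; the subtlety is that a priori a Borel of $G$ through $\Phi(\mathbf{W}_F\times B_2)$ need only contain $\Phi(\fo_F^\times)$ as a subgroup, not centralize it, so one invokes that elements of a connected solvable group lying in a common Borel together with a torus they commute with behave well — more concretely, since the Borel normalizes its own unipotent radical and $\Phi(\fo_F^\times)$ consists of semisimple elements, one reduces to the torus, where centralizing is automatic. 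For the reverse inclusion one uses \eqref{eq:tqx}, or rather the group generated by $t$ and $\Phi(B_2)$: the diagonal torus of $\Phi(\SL_2)$ is recovered from $t$ and $t_q$ for varying $q$, hence from $t$ and $x$ once $x$ is in the Borel. I expect the verification that containing $x$ and the $\Phi(\SL_2)$-torus is the same as containing all of $\Phi(B_2)$ to be routine once one notes $B_2 = T_2 \ltimes U_2$ with $U_2$ one-dimensional. With the variety identification in hand, the rest is formal bookkeeping of the two group actions via Lemma~\ref{inc}.
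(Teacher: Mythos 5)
Your argument hinges on the identification $\cB_G^{\Phi(\mathbf{W}_F \times B_2)} = \cB_H^{t,x}$, and that identification is false; it is precisely where the content of the theorem lies. Two separate things go wrong. First, the left-hand side consists of Borel subgroups of $G$ and the right-hand side of Borel subgroups of $H$, and no Borel of $G$ "lies in $H$" unless $H=G$; the natural comparison $B \mapsto B \cap H$ is surjective but not injective. For a semisimple $s \in T$ with $H = \Z_G(s)$ connected, $\cB_G^{s}$ is a disjoint union of copies of $\cB_H$ indexed by $\cW^H \backslash \cW^G$, in general more than one copy. Concretely, take $G=\SL_2(\C)$ with $\im c^\fs$ generated by the regular element $\mathrm{diag}(i,-i)$, so $H=T$, and take $x=1$: then $\cB_G^{\Phi(\mathbf{W}_F\times B_2)}$ is the two Borels containing $T$, while $\cB_H^{t,x}=\cB_T$ is a single point. (The theorem survives because $A$ is trivial there and $\cR(A,-)$ records only \emph{which} irreducibles occur, not multiplicities or Betti numbers.) Second, containing $\Phi(\mathbf{W}_F\times B_2)$ forces a Borel to contain the image $h(\C^\times)=\Phi(1,Y_\bullet)$ of the diagonal torus of $\SL_2(\C)$, a condition absent from $\cB_H^{t,x}$. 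Your claim that this torus is recovered from $t$ and $x$ is incorrect: by \eqref{eq:S.12} one has $t_q = t\,\Phi(Y_{q^{1/2}})$, so $t_q$ is recovered from $t$ and the \emph{torus element} $\Phi(Y_{q^{1/2}})$, not from $t$ and $x$; the relation $t_q x t_q^{-1}=x^q$ does not place $t_q$ in the group generated by $t$ and $x$. A Borel of $H$ containing $t$ and $x$ need not contain $h(\C^\times)$: the locus $\cB_H^{t,x,h(\C^\times)}$ is the fixed-point set of a $\C^\times$-action on $\cB_H^{t,x}$ and is typically a proper subvariety.

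The theorem is nonetheless true, but for homological rather than set-theoretic reasons: one must show that passing from $\cB_G^{\Phi(\mathbf{W}_F\times B_2)}$ to the $\C^\times$-fixed locus, and then relaxing the torus condition to obtain $\cB_H^{t,x}$, changes the $A$-module $H_*(-)$ only in ways that preserve the set of occurring irreducibles (a Bialynicki--Birula type localization, together with the multi-component comparison between $\cB_G$ and $\cB_H$). This is exactly Reeder's Lemma 4.4.1, which the paper invokes, adding only the genuinely trivial adjustment that a Borel of $H$ contains $\{x,t_q,Y_q\}$ if and only if it contains $\{x,t,Y_q\}$, since $t=t_q Y_q^{-1}$ --- note that this passes between $t$ and $t_q$ using $Y_q$, not $x$. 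Your final bookkeeping of the actions via Lemma~\ref{inc} and the equality $\Z_G(\Phi)=\Z_H(\Phi)$ is fine, but it only becomes relevant after the substantive comparison of varieties, which cannot be an equality and which your proposal skips.
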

\begin{proof} 
This statement is equivalent to \cite[Lemma 4.4.1]{Reed} with a minor 
adjustment in his proof.   To translate into Reeder's paper, write
\[
t_q = \tau, Y_q = \tau_u, x = u, t = s.
\]
The adjustment consists in the observation that the Borel subgroup $B$ of $H$ 
contains $\{x,t_q,Y_q\}$ if and only if $B$ contains
$\{x,t,Y_q\}$.   This is because $t = t_qY_q^{-1}$.   
Therefore, in the conclusion of his proof, $\cB^{\tau, u}_H$, which is $\cB_H^{t_q, x}$, 
can be replaced by $\cB_H^{t,x}$.   
\end{proof}

In the following sections we will make use of two different but related 
kinds of parameters. 
\vspace{2mm}

\subsection{Enhanced Langlands parameters} 
\label{subsec:enp}   
Let $\bW_F$ denote the Weil group of $F$.  Via the Artin reciprocity map a Langlands 
parameter $\Phi$ for the principal 
series of $\cG$ will factor through $F^{\times} \times \SL_2(\C)$:
\begin{align}\label{Phi}
\Phi : \bW_F \times \SL_2(\C) \to F^{\times} \times \SL_2(\C) \to G.
\end{align}
Such a parameter is \emph{enhanced} in the following way.   Let $\rho$ be an irreducible 
representation of the component group of the centralizer of the image of $\Phi$:
\[
\rho \in \Irr \, \pi_0 ( Z_G(\im \, \Phi)).
\]
The pair $(\Phi, \rho)$ will be called an \emph{enhanced Langlands parameter}.   
   
We rely on Reeder's
classification of the constituents of a given principal series representation of $\cG$,  
see \cite[Theorem 1, p.101-102]{Reed}.    Reeder's theorem amounts to a 
local Langlands correspondence for the principal series 
of $\cG$.    Reeder uses only enhanced Langlands parameters with a particular geometric origin, 
namely those which occur in the homology of a certain variety of Borel subgroups of $G$. 
 
Let $B_2$ denote the standard Borel subgroup of $\SL_2(\C)$.   
For a Langlands parameter as in \eqref{Phi}, the variety of Borel subgroups 
$\mathcal B_G^{\Phi (\mathbf W_F \times B_2)}$ is nonempty, and the centralizer
$\Z_G (\Phi)$ of the image of $\Phi$ acts on it. Hence the group of components
$\pi_0 (\Z_G (\Phi))$ acts on the homology $H_* \big( \mathcal B_G^{\Phi (\mathbf W_F 
\times B_2)} ,\C \big)$. We call an irreducible representation $\rho$ of 
$\pi_0 (\Z_G (\Phi))$ \emph{geometric} if
\[
\rho\in\cR\left(\pi_0 (\Z_G (\Phi)),\mathcal B_G^{\Phi (\mathbf W_F \times B_2)}\right).
\] 
Consider the set of enhanced Langlands parameters $(\Phi,\rho)$ for which $\rho$ is geometric.
The group $G$ acts on these parameters by 
\begin{equation}\label{eq:defKLRparameter}
g \cdot (\Phi,\rho) = (g \Phi g^{-1}, \rho \circ \mathrm{Ad}_g^{-1}) 
\end{equation}
and we denote the corresponding equivalence class by $[\Phi,\rho ]_G$.  

\begin{defn}\label{Psi} 
Let $\Psi(G)_{\en}^\fs$ denote the set of $H$-conjugacy classes of enhanced 
parameters $(\Phi, \rho)$ for $\cG$ such that 
\begin{itemize}
\item $\rho$ is geometric;
\item $\Phi | \fo^{\times} = c^\fs$.
\end{itemize}
\end{defn}

Let us define a topology on $\Psi(G)_{\en}^\fs$.
For any $(\Phi, \rho) \in \Psi(G)_{\en}^\fs$ the element $x = \Phi ( \matje{1}{1}{0}{1} ) \in H$
is unipotent and $t = \Phi (\varpi_F,I) \in H$ is semisimple. By the Jacobson--Morozov Theorem 
the $H$-conjugacy class of $\Phi$ is determined completely by the $H$-conjugacy class of 
$(t,x)$, see \cite[\S 4.2]{Reed}. 
We endow the finite set $\mathfrak U^\fs$ of unipotent conjugacy classes in $H$ with the discrete 
topology and we regard the space of semisimple conjugacy classes in $H$ as the algebraic variety 
$T^\fs / W^\fs$. On $T^\fs / W^\fs \times \mathfrak U^\fs$ we take the product topology and we
endow $\Psi(G)_{\en}^\fs$ with the pullback topology from $T^\fs / W^\fs \times \mathfrak U^\fs$,
with respect to the map $(\Phi,\rho) \mapsto (t,x)$.

Notice that for this topology the $\rho$ does not play a role, two elements 
of $\Psi (G)^\fs_{\en}$ with the same $\Phi$ are inseparable.

\begin{thm}\cite{Reed}\label{Reed}   
Suppose that the residual characteristic of $F$ satisfies Condition \ref{CC}.   
\begin{enumerate}
\item There is a canonical continuous bijection
\[
\Psi(G)_{\en}^\fs \to \Irr (\cG)^\fs  .
\]
\item This bijection maps the set of enhanced Langlands parameters $(\Phi,\rho)$ for which 
$\Phi (F^\times)$ is bounded onto $\Irr (\cG)^\fs \cap \Irr (\cG)_{\mathrm{temp}}$.
\item If $\sigma \in \Irr(\cG)^\fs$ corresponds to $(\Phi,\rho)$, then the cuspidal support 
$\pi^\fs (\sigma) \in T^\fs / W^\fs$, considered as a semisimple conjugacy class in $H^\fs$, 
equals $\Phi \big( \varpi_F, Y_{q^{1/2}} \big)$.
\end{enumerate}
\end{thm}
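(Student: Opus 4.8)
The plan is to deduce Theorem~\ref{Reed} from Reeder's classification theorem \cite[Theorem~1]{Reed} together with Theorem~\ref{Rgroup} above, which reconciles the two different varieties of Borel subgroups that enter the picture. First I would recall Reeder's setup: he attaches to each constituent of a principal series representation $I_B^\cG(\chi\otimes\nu)$ an enhanced parameter built from a pair $(t_q,x)$ satisfying $t_q x t_q^{-1}=x^q$, together with a representation $\rho$ of $\pi_0(\Z_H(t_q,x))$ occurring in the homology of $\cB_H^{t_q,x}$. The issue is that Reeder's parameters are indexed by such pairs $(t_q,x)$ depending on $q$, whereas our $\Psi(G)_{\en}^\fs$ is phrased in terms of the $q$-independent datum $\Phi\colon F^\times\times\SL_2(\C)\to G$ with $\Phi|_{\fo_F^\times}=c^\fs$. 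The translation is exactly \eqref{eq:S.12}: given $\Phi$ and a choice of $q^{1/2}$, set $t=\Phi(\varpi_F,I)$, $t_q=t\,\Phi(Y_{q^{1/2}})$, $x=\Phi(\matje{1}{1}{0}{1})$; conversely a compatible pair $(t_q,x)$ together with the fixed $c^\fs$ determines $t$, hence $\Phi$ up to $H$-conjugacy via Jacobson--Morozov \cite[\S4.2]{Reed}. So the underlying \emph{sets} match, and by Theorem~\ref{Rgroup} (with $A=\pi_0(\Z_H(\Phi))=\pi_0(\Z_H(t,x))$, and noting $\Z_H(t_q,x)=\Z_H(t,x)$ since $t_q$ and $Y_{q^{1/2}}$ commute with everything relevant) the geometric/homological condition on $\rho$ is the same whether stated with $\cB_G^{\Phi(\bW_F\times B_2)}$ or with $\cB_H^{t,x}$ or with Reeder's $\cB_H^{t_q,x}$. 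This gives the bijection of~(1).

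For continuity I would argue as follows. We equipped $\Psi(G)_{\en}^\fs$ with the pullback of the product topology on $T^\fs/W^\fs\times\fU^\fs$ under $(\Phi,\rho)\mapsto(t,x)$, so a map out of $\Psi(G)_{\en}^\fs$ is continuous iff it factors appropriately through this product, and since $\fU^\fs$ is discrete the only real content is continuity in the semisimple variable $t$. On the representation-theoretic side, $\Irr(\cG)^\fs$ carries the Jacobson topology, and the cuspidal support map $\pi^\fs\colon\Irr(\cG)^\fs\to T^\fs/W^\fs$ is continuous; one then checks that fibres of $\pi^\fs$ are finite and that the bijection is continuous by matching it fibrewise with the map $t\mapsto t_q$, which is an algebraic isomorphism of $T^\fs$ hence a homeomorphism of $T^\fs/W^\fs$. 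Concretely: the Bernstein-theoretic parametrisation shows $\Irr(\cG)^\fs$ is, as a topological space, glued from the unipotent pieces over $T^\fs/W^\fs$ in a way compatible with the affine Hecke algebra $\cH(H^\fs)$ (Roche's isomorphism, Lemma~\ref{lem:Roche}), and the asymptotic/geometric description of $\Irr(\cH(H^\fs))$ is exactly $T^\fs/W^\fs\times\fU^\fs$ at the level of sets with the stated topology. Assembling these identifications yields that the bijection in~(1) is a homeomorphism onto its image, in particular continuous; ``canonical'' means independent of auxiliary choices ($\varpi_F$, $q^{1/2}$) up to the built-in $W^\fs$-action, which follows because a different uniformizer changes $c^\fs$ within its $\cW^G$-orbit only.

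Parts~(2) and~(3) are then essentially bookkeeping on top of~(1). For~(3): by construction the cuspidal support of the constituent attached to $(\Phi,\rho)$ is, in Bernstein/Reeder terms, the $W^\fs$-orbit of $t_q$; and $t_q=t\,\Phi(Y_{q^{1/2}})=\Phi(\varpi_F,I)\Phi(1,Y_{q^{1/2}})=\Phi(\varpi_F,Y_{q^{1/2}})$, which is the claimed element of $T^\fs/W^\fs$ viewed as a semisimple class in $H^\fs$. For~(2): $\Phi(F^\times)$ bounded means $t=\Phi(\varpi_F,I)$ has absolute value $1$ in $T^\fs$ (i.e.\ lies in the compact real form), equivalently $|t_q|=|Y_{q^{1/2}}|$ so that the cuspidal support has the ``real part'' prescribed by $q$; this is precisely Casselman's tempered criterion translated through Reeder's parametrisation, so these parameters map exactly onto $\Irr(\cG)^\fs\cap\Irr(\cG)_{\mathrm{temp}}$. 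I expect the main obstacle to be the continuity statement in~(1): matching the Jacobson topology on $\Irr(\cG)^\fs$ with the pullback topology on $\Psi(G)_{\en}^\fs$ requires knowing that the families of representations are ``algebraic'' in the cuspidal support parameter in a precise sense — this is where one genuinely leans on Roche's equivalence with modules over an affine Hecke algebra and on the known topology of the dual of such an algebra, rather than on anything elementary; the set-theoretic bijection and parts~(2),(3) are comparatively routine once Theorem~\ref{Rgroup} is in hand.
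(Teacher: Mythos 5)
Your overall strategy matches the paper's: part (1) is Reeder's classification \cite[Theorem 1]{Reed} read through the dictionary $t_q = t\,\Phi(Y_{q^{1/2}})$ of \eqref{eq:S.12} together with Theorem \ref{Rgroup} to reconcile the homological conditions on $\rho$, and parts (2) and (3) are reductions to known facts about the parametrisation. Two points where you diverge, one of them a genuine error. First, your continuity argument for (1) is both more roundabout and overclaimed. The paper's argument is short: Reeder attaches to $\Phi$ a standard module $M_{t,x}$ with maximal semisimple quotient $V_{t,x}$, and continuity of the bijection reduces to the observation that $M_{t,x}$ depends continuously on $t$ for fixed $x$, which is visible in \cite[\S 3.5]{Reed}; no appeal to the spectral theory of the affine Hecke algebra is needed. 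More seriously, your conclusion that the map is ``a homeomorphism onto its image'' cannot be right: the topology placed on $\Psi(G)_{\en}^\fs$ is pulled back from $(t,x)$ and explicitly does not see $\rho$, so two enhanced parameters with the same $\Phi$ are topologically indistinguishable, whereas the Jacobson topology on $\Irr(\cG)^\fs$ will in general partially separate the distinct constituents of $V_{t,x}$ via closure relations. Only continuity in the stated direction holds, and that is all the theorem asserts. Second, your treatment of (2) and (3) leans on ``Casselman's criterion'' and ``by construction'' where actual inputs are required: the identification of tempered representations with bounded parameters is \cite[\S 8]{KL}, and the identification of the cuspidal support with the class of $t_q = \Phi(\varpi_F, Y_{q^{1/2}})$ is \cite[5.12 and Theorem 7.12]{KL}; in both cases one must additionally note that Reeder's constructions, which extend the Kazhdan--Lusztig setting to ramified principal series, preserve these properties. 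The computation $t_q = \Phi(\varpi_F, Y_{q^{1/2}})$ itself you have correctly.
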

\begin{proof} (1) The canonical bijection is Reeder's classification of the constituents 
of a given principal series representation, see \cite[Theorem 1, p.101 -- 102]{Reed}. 

First he associates to
$\Phi$ a finite length "standard" representation of $\cG$, say $M_{t,x}$, with a unique maximal 
semisimple quotient $V_{t,x}$. Then $(\Phi,\rho)$ is mapped to an irreducible constituent of 
$V_{t,x}$. To check that the bijection is continuous with respect to the above topology, it 
suffices to see that $M_{t,x}$ depends continuously on $t$ when $x$ is fixed. This property
is clear from \cite[\S 3.5]{Reed}.\\
(2) Reeder's work is based on that of
Kazhdan--Lusztig, and it is known from \cite[\S 8]{KL} that the tempered $\cG$-representations 
correspond precisely to the set of bounded enhanced L-parameters in the setting of \cite{KL}. 
As the constructions in \cite{Reed} preserve temperedness, this characterization remains valid
in Reeder's setting.\\
(3) The element $\Phi \big( \varpi_F, Y_{q^{1/2}} \big) \in H$ is the same as $t_q$
in \eqref{eq:S.12}, up to $H$-conjugacy. In the setting of Kazhdan--Lusztig,
it is known from \cite[5.12 and Theorem 7.12]{KL} that property (3) holds.  As in (2), this
is respected by the constructions of Reeder that lead to (1).
\end{proof}

\subsection{Affine Springer parameters}
\label{par:affSpringer}
As before, suppose that $t \in H$ is semisimple and that $x \in \Z_H (t)$ is unipotent. 
Then $\Z_H (t,x)$ acts on $\mathcal B_H^{t,x}$ and $\pi_0 (\Z_H (t,x))$ acts on the
homology of this variety. In this setting we say that $\rho_1 \in \Irr \big( 
\pi_0 (\Z_H (t,x)) \big)$ is \emph{geometric} if it belongs to 
$\cR\left(\pi_0 (\Z_H (t,x)),\mathcal B_H^{t,x} \right)$.

For the affine Springer parameters it does not matter whether we 
consider the total homology or only the homology in top degree. Indeed, it follows 
from \cite[bottom of page~296 and Remark 6.5]{Shoji} that any irreducible representation 
$\rho_1$ which appears in 
$H_* \big( \mathcal B_H^{t,x} ,\C \big)$, already appears in the top homology of this 
variety. Therefore, we may refine Theorem~\ref{Rgroup} as follows:

\begin{thm}\label{Rgroup_refined} 
\[
\cR(A, \cB^{\Phi(\mathbf{W}_F \times B_2)}) = \cR^{\top}(A, \cB^{t,x}_H),
\]
where $\top$ refers to highest degree in which the homology is nonzero, 
the real dimension of $\mathcal B_H^{t,x}$.
\end{thm}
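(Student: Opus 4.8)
The plan is to obtain Theorem~\ref{Rgroup_refined} as a formal consequence of Theorem~\ref{Rgroup} together with the homological input about Springer fibres quoted from Shoji. First I would use Theorem~\ref{Rgroup} to rewrite the left-hand side as $\cR(A, \cB^{\Phi(\mathbf{W}_F \times B_2)}) = \cR(A, \cB^{t,x}_H)$, so that what remains is the purely geometric identity $\cR(A, \cB^{t,x}_H) = \cR^{\top}(A, \cB^{t,x}_H)$. One inclusion is free of charge: the $A = \pi_0(\Z_H(t,x))$-action on $H_*(\cB^{t,x}_H,\C)$ preserves the homological grading, so $H_{\top}(\cB^{t,x}_H,\C)$ is an $A$-subrepresentation of $H_*(\cB^{t,x}_H,\C)$, whence $\cR^{\top}(A,\cB^{t,x}_H) \subseteq \cR(A,\cB^{t,x}_H)$.

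For the opposite inclusion I would first recast $\cB^{t,x}_H$ as a genuine Springer fibre so that Shoji's theorem applies. Setting $M = \Z_H(t)$, a Borel subgroup of $H$ contains the semisimple element $t$ exactly when it is a Borel subgroup of $M^\circ$, and the unipotent element $x$ automatically lies in $M^\circ$; intersecting with $M^\circ$ therefore gives a $\Z_H(t,x)$-equivariant isomorphism $\cB^{t,x}_H \cong \cB^{x}_{M^\circ}$, hence an $A$-equivariant one. I would then invoke \cite[bottom of page~296 and Remark~6.5]{Shoji}, to the effect that every irreducible $A$-subrepresentation occurring in $H_*(\cB^{x}_{M^\circ},\C)$ already occurs in the top-degree homology. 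Transporting this back along the isomorphism yields $\cR(A,\cB^{t,x}_H) \subseteq \cR^{\top}(A,\cB^{t,x}_H)$, and combining the two inclusions with Theorem~\ref{Rgroup} gives the statement. The identification of $\top$ with the real dimension of $\cB^{t,x}_H$ then follows from the fact that $\cB^{x}_{M^\circ}$ is a projective variety of pure complex dimension $d$, so that $H_{2d} \neq 0$ (fundamental classes of its irreducible components) while $H_i = 0$ for $i > 2d$.

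The only point that requires genuine care --- and the step I expect to be the main, if mild, obstacle --- is matching the group actions: Shoji's "generation in top degree" is most naturally phrased for the component group $\pi_0(\Z_{M^\circ}(x))$ of the reductive centralizer inside the connected group $M^\circ$, whereas here we need it for the possibly larger group $A = \pi_0(\Z_H(t,x)) = \pi_0(\Z_M(x))$. I would handle this by observing that the $A$-module structure on $H_*(\cB^{x}_{M^\circ},\C)$ refines the $\pi_0(\Z_{M^\circ}(x))$-module structure while the degree filtration is $A$-stable, so top-degree generation passes to $A$-isotypic components unchanged; alternatively one appeals to the form of Shoji's result valid for the full, possibly disconnected, centralizer, which is the version implicit in Reeder's paper. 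Everything else in the argument is bookkeeping.
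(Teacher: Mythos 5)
Your proposal is correct and follows the paper's own route exactly: Theorem \ref{Rgroup} handles the passage from $\cB^{\Phi(\mathbf{W}_F\times B_2)}$ to $\cB^{t,x}_H$, and the cited result of Shoji (any irreducible representation appearing in $H_*(\cB^{t,x}_H,\C)$ already appears in top degree) supplies the equality $\cR(A,\cB^{t,x}_H)=\cR^{\top}(A,\cB^{t,x}_H)$. The extra care you take (the reduction to $\cB^x_{M^\circ}$ --- which is really a finite disjoint union of copies of it --- and the matching of the $A$-action with $\pi_0(\Z_{M^\circ}(x))$) is more than the paper records, since it simply applies Shoji's statement to $\cB^{t,x}_H$ directly, but it is consistent with the intended argument.
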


We call such triples $(t,x,\rho_1)$ affine Springer parameters for $H$,
because they appear naturally in the representation theory of the affine Weyl group
associated to $H$. The group $H$ acts on such parameters by conjugation, and we 
denote the conjugacy classes by $[t,x,\rho_1]_H$.   

\begin{defn}
The set of $H$-conjugacy classes of affine Springer parameters will be denoted $\Psi(H)_{\aff}$.
\end{defn}  

Notice that the projection on the first coordinate is a canonical map
$\Psi (H)_\aff \to T / \cW^H$. We endow $\Psi(H)_{\aff}$ with a topology in the same way
as we did for $\Psi(G)_{\en}^\fs$, as the pullback of the product topology on
$T / \cW^H \times \mathfrak U^\fs$ via the map $[t,x,\rho_1]_H \mapsto (t,x)$.

For use in Theorem \ref{thm:bijection} we recall the parametrization of 
irreducible representations of $X^* (T) \rtimes \mathcal W^H$ from \cite{Kat}.
Let $t \in T$ and let $x \in M^\circ = \Z_H (t)^\circ$ be unipotent.
Kato defines an action of $X^* (T) \rtimes \mathcal W^H$ on the top homology 
$H_{d(x)}(\mathcal B^{t,x}_H,\C)$, which commutes with the action of 
$\Z_H (t,x)$ induced by conjugation of Borel subgroups.
By \cite[Proposition 6.2]{Kat} there is an isomorphism of 
$X^* (T) \rtimes \mathcal W^H$-representations
\begin{equation}\label{eq:indKato}
H_{d(x)}(\mathcal B^{t,x}_H,\C) \cong
\mathrm{ind}_{X^* (T) \rtimes \cW^{M^\circ}}^{X^* (T) \rtimes \cW^H}
\big( \C_t \otimes H_{d(x)}(\mathcal B^{x}_{M^\circ},\C) \big) .
\end{equation}
Here $H_{d(x)}(\mathcal B^{x}_{M^\circ},\C)$ is a representation occurring in
the Springer correspondence for $\cW^{M_0}$, promoted to a representation of
$X^* (T) \rtimes \cW^H$ by letting $X^* (T)$ act trivially. Hence \eqref{eq:indKato} 
has central character $\cW^H t$. We note that the underlying vector space of this 
representation does not depend on $t$, and that this determines an algebraic family of 
$X^* (T) \rtimes \mathcal W^H$-representations parametrized by $T^{\cW^{M_0}}$.
Let $\rho_1 \in \Irr \big( \pi_0 (\Z_H (t,x)) \big)$. By \cite[Theorem 4.1]{Kat} the 
$X^* (T) \rtimes \mathcal W^H$-representation
\begin{equation}\label{eq:KatoMod}
\mathrm{Hom}_{\pi_0 (\Z_H (t,x))} \big( \rho_1, H_{d(x)}(\mathcal B^{t,x}_H,\C) \big)  
\end{equation}
is either irreducible or zero.  Moreover every irreducible representation of 
$X^* (T) \rtimes \mathcal W^H$ is obtained in this way, and the data $(t,x,\rho_1)$
are unique up to $H$-conjugacy. So Kato's results provide a natural bijection
\begin{equation}\label{eq:affSpringer}
\Psi (H)_\aff \to \Irr (X^* (T) \rtimes \cW^H ) .
\end{equation}
This generalizes the Springer correspondence for finite Weyl groups, which can be 
recovered by considering the representations on which $X^* (T)$ acts trivially.

In \cite{KL,Reed} there are some indications that the above kinds of parameters are
essentially equivalent. The next result allows us to make this precise
in the necessary generality.

\begin{thm}\label{compareParameters}
Let $\fs$ be a Bernstein component in the principal series, associate 
$c^\fs \colon \fo_F^\times \to T$ 
to it as in Lemma \ref{lem:cBernstein} and let $H$ be as in (\ref{H}).   
There are natural bijections between $H$-equivalence classes of:
\begin{itemize}
\item enhanced Langlands parameters $(\Phi, \rho)$ for $\cG$,  
with $\rho$ geometric and $\Phi \big|_{\fo_F^\times} = c^\fs$;
\item affine Springer parameters for $H$.
\end{itemize}
In other words we have a homeomorphism
\[
\Psi(G)_{\en}^\fs \simeq \Psi(H)_{\aff}.
\]
\end{thm}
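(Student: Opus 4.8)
The plan is to set up the bijection explicitly via the intermediate data $(t,x)$ and then check it is a homeomorphism by comparing the two topologies, which are both defined as pullbacks from products $T^\fs/W^\fs \times \mathfrak U^\fs$. First I would send an enhanced parameter $(\Phi,\rho) \in \Psi(G)_{\en}^\fs$ to the triple $(t,x,\rho_1)$, where $t = \Phi(\varpi_F,I)$ and $x = \Phi(1,\matje{1}{1}{0}{1})$ as in \eqref{H}, and $\rho_1$ is the representation of $\pi_0(\Z_H(t,x))$ corresponding to $\rho \in \Irr\,\pi_0(\Z_G(\im\Phi))$. The identification of the enhancement data is the heart of the argument: by Lemma \ref{inc} the inclusion $\Z_H(\Phi) \to \Z_H(t,x)$ is a homotopy equivalence, so $\pi_0(\Z_H(\Phi)) = \pi_0(\Z_H(t,x)) = A$; and one must further argue that $\pi_0(\Z_G(\im\Phi))$ can be replaced by $\pi_0(\Z_H(\im\Phi))$. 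This last point uses that $\Phi(\fo_F^\times) = c^\fs$ has image with centralizer $H = \Z_G(\im c^\fs)$, so $\Z_G(\im\Phi) = \Z_G(\im c^\fs) \cap \Z_G(\Phi(\varpi_F,\SL_2(\C))) = \Z_H(\Phi)$; hence the two component groups literally coincide, not merely up to homotopy.

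Next I would match the geometricity conditions. On the $G$-side, $\rho$ geometric means $\rho \in \cR(A, \cB_G^{\Phi(\mathbf W_F \times B_2)})$; on the $H$-side, $\rho_1$ geometric means $\rho_1 \in \cR(A, \cB_H^{t,x})$, equivalently (by Theorem \ref{Rgroup_refined}, using Shoji's result) $\rho_1 \in \cR^{\top}(A, \cB_H^{t,x})$. Theorem \ref{Rgroup} gives exactly $\cR(A, \cB_G^{\Phi(\mathbf W_F \times B_2)}) = \cR(A, \cB_H^{t,x})$, so under the identification of component groups the geometric $\rho$ correspond precisely to the geometric $\rho_1$. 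Combined with the Jacobson--Morozov theorem (invoked already in the definition of the topology on $\Psi(G)_{\en}^\fs$), which says the $H$-conjugacy class of $\Phi$ is determined by that of $(t,x)$, this shows the assignment $(\Phi,\rho) \mapsto (t,x,\rho_1)$ is a well-defined bijection $\Psi(G)_{\en}^\fs \to \Psi(H)_{\aff}$ onto the $H$-conjugacy classes of affine Springer parameters whose semisimple part lies in the $\cW^H$-orbit prescribed by $c^\fs$; I should note that $t$ ranges exactly over the semisimple classes in $H$ commuting with $\im c^\fs$, i.e.\ over all of $T^\fs/W^\fs$, since $t = \Phi(\varpi_F,I) \in H$ and conversely any such $t$ together with $x \in \Z_H(t)$ unipotent extends to a parameter $\Phi$ with $\Phi|_{\fo_F^\times} = c^\fs$.

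Finally I would verify the homeomorphism claim. Both spaces carry, by construction in \S\ref{subsec:enp} and \S\ref{par:affSpringer}, the pullback of the product topology on $T^\fs/W^\fs \times \mathfrak U^\fs$ via $(\Phi,\rho) \mapsto (t,x)$, respectively $[t,x,\rho_1]_H \mapsto (t,x)$; here one uses that $T/\cW^H \cong T^\fs/W^\fs$ as the variety of semisimple classes in $H$, and that $\mathfrak U^\fs$ is the same finite set of unipotent classes in $H$ on both sides. Since the bijection constructed above is compatible with the projections to $T^\fs/W^\fs \times \mathfrak U^\fs$ (it sends $(\Phi,\rho)$ with invariants $(t,x)$ to $[t,x,\rho_1]_H$ with the same $(t,x)$), it is automatically a homeomorphism for the two pullback topologies — the enhancement $\rho$, resp.\ $\rho_1$, plays no role in either topology, as remarked after Definition \ref{Psi}.

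The main obstacle is the precise identification of enhancement data, i.e.\ showing that the representation $\rho$ of $\pi_0(\Z_G(\im\Phi))$ and the representation $\rho_1$ of $\pi_0(\Z_H(t,x))$ really live on the same finite group and that ``geometric'' means the same thing on both sides. Everything needed is in place — $\Z_G(\im\Phi) = \Z_H(\Phi)$ because $\im c^\fs$ has centralizer $H$, Lemma \ref{inc} for $\Z_H(\Phi) \simeq \Z_H(t,x)$, and Theorem \ref{Rgroup} (refined in Theorem \ref{Rgroup_refined}) to match the sets $\cR(A,-)$ of geometric representations — but assembling these identifications carefully, and checking that the $G$-conjugacy relation on enhanced parameters restricts to the $H$-conjugacy relation once $\Phi|_{\fo_F^\times}$ is fixed at $c^\fs$, is where the real content lies.
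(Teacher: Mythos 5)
Your proposal is correct and follows essentially the same route as the paper: extract $(t,x)$ from $\Phi$, recover $\Phi$ from $(t,x)$ via Jacobson--Morozov and the formula $\Phi(u\varpi_F^n,Y)=c^\fs(u)\,t^n\,\gamma(Y)$, match the enhancements $\rho\leftrightarrow\rho_1$ through $\Z_G(\im\Phi)=\Z_H(\Phi)$, Lemma \ref{inc} and Theorems \ref{Rgroup}--\ref{Rgroup_refined}, and conclude the homeomorphism from the fact that both topologies are pullbacks along $(t,x)$. You in fact spell out the identification of component groups and the conjugacy-relation bookkeeping more explicitly than the paper does.
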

\begin{proof}   
An $L$-parameter gives rise to the ingredients $t,x$  in an affine Springer parameter 
in the following way. For an $L$-parameter
\[
\Phi \colon F^{\times} \times \SL_2(\C) \to G
\]
we set $t = \Phi(\varpi_F, 1)$ and $x = \Phi \big( 1, \matje{1}{1}{0}{1} \big)$.

Conversely, we work with the Jacobson--Morozov Theorem \cite[p. 183]{CG}.  
Let $x$ be a unipotent element in $M^0$. There exist rational homomorphisms 
\begin{equation} \label{eqn:gamt}
\gamma \colon \SL_2 (\Cset) \to M^0 \quad \text{with} \quad 
\gamma \big( \matje{1}{1}{0}{1} \big) = x ,
\end{equation}
see \cite[\S 3.7.4]{CG}. Any two such homomorphisms  $\gamma$ are conjugate by
elements of $\Z_{M^\circ}(x)$. 
Define the Langlands parameter $\Phi$ as follows:
\begin{equation}\label{eqn:Phi}
\Phi \colon F^{\times} \times \SL_2 (\Cset) \to G, \qquad  
(u\varpi_F^n,Y) \mapsto c^\fs (u) \cdot t^n\cdot \gamma(Y) 
\end{equation}
for all  $u \in \fo_F^\times, \; n \in \Zset,\; Y \in \SL_2 (\Cset)$.

Note that the definition of $\Phi$ uses the appropriate data: 
the semisimple element $t \in T$, the map $c^\fs$, and the 
homomorphism $\gamma$ (which depends on  $x$).  

Since $x$ determines $\gamma$ up to $M^\circ$-conjugation, $c^\fs,x$ and $t$ 
determine $\Phi$ up to conjugation by their common centralizer in $G$. 
Notice also that one can recover $c^\fs, x$ and $t$ from $\Phi$ and that
\begin{equation}\label{eq:hPhi}
h (\alpha): = \Phi (1, Y_\alpha) 
\end{equation}
defines a cocharacter $\C^\times \to T$.   


To complete $\Phi$ or $(t,x)$ to a parameter of the appropriate kind,
we must add an irreducible representation $\rho$ or $\rho_1$.
Then the bijectivity follows from Theorem~\ref{Rgroup_refined}.

It is clear that the above correspondence between $\Phi$ and $(t,x)$ is 
continuous in both directions. In view of the chosen topologies on
$\Psi(G)_{\en}^\fs$ and $\Psi(H^\fs)_{\aff}$, this implies that the
bijection is a homeomorphism. 
\end{proof}

\section{Structure theorem}
\label{sec:unip}

Let $\fs \in \mathfrak B (\cG,\cT)$ and construct $c^\fs$ as in 
Lemma \ref{lem:cBernstein}. We note that the set of enhanced Langlands parameters 
$\Phi(G)_\en^\fs$ is naturally labelled by the unipotent classes in $H$: 
\begin{equation}
\Phi(G)_\en^{\fs,[x]} := \big\{ (\Phi,\rho) \in \Phi(G)_\en^\fs \mid 
\Phi \big( 1, \matje{1}{1}{0}{1} \big) \text{ is conjugate to } x \big\} .
\end{equation}
Via Theorem \ref{compareParameters} and \eqref{eq:affSpringer} the sets $\Phi(G)_\en^\fs$ 
and $\Irr (X^* (T) \rtimes \cW^H)$ are naturally in 
bijection with $\Psi (H)_\aff$. In this way we can associate to any of these
parameters a unique unipotent class in $H$:
\begin{equation} \label{eq:labelling}
\begin{aligned}
& \Irr (\cG )^\fs && = \; \bigcup\nolimits_{[x]} \Irr (\cG )^{\fs,[x]} , \\
& \Psi (H)_\aff && = \; \bigcup\nolimits_{[x]} \Psi (H)_\aff^{[x]} ,\\
& \Irr (X^* (T) \rtimes \cW^H) && 
= \; \bigcup\nolimits_{[x]} \Irr (X^* (T) \rtimes \cW^H)^{[x]} .
\end{aligned}
\end{equation}
As $\Irr (\cG )^\fs = \Irr (\cH^\fs)$ and $\Irr (X^* (T) \rtimes \cW^H) =
\Irr (\C [X^* (T) \rtimes \cW^H])$, these spaces are endowed with the Jacobson topology
from the respective algebras $\cH^\fs$ and $\C [X^* (T) \rtimes \cW^H]$.

Recall from Section \ref{sec:extquot} that 
\[
\widetilde{T^\fs} = \{ (w,t) \in W^\fs \times T^\fs \mid w t = t \}
\]
and $T^\fs /\!/ W^\fs = \widetilde{T^\fs} / W^\fs$. We endow $\widetilde{T^\fs}$ with
the product of the Zariski topology on $T^\fs \cong T$ and the discrete topology on 
$W^\fs = \cW^H$. Then $T^\fs /\!/ W^\fs$ with the quotient topology from $\widetilde{T^\fs}$ 
becomes a disjoint union of algebraic varieties. The following result enables us to transfer 
the labellings \eqref{eq:labelling} to $T^\fs \q W^\fs$.

\begin{thm} \label{thm:bijection}
There exists a bijection 
$\tilde{\mu}^{\fs} : T^\fs \q W^\fs \to \Irr (X^* (T) \rtimes W^\fs)$ such that: 
\begin{itemize}
\item[(1)] $\tilde{\mu}^{\fs}$ respects the projections to $T^\fs / W^\fs$;
\item[(2)] for every unipotent class $x$ of $H$, the inverse image 
$(\tilde{\mu}^\fs )^{-1} \Irr (X^* (T) \rtimes W^\fs)^{[x]}$ is a union of connected 
components of $T^\fs \q W^\fs$. 
\end{itemize}
\end{thm}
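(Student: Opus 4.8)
The statement compares two "Springer-type" parametrizations attached to $H = H^\fs$, one on the geometric side ($T^\fs \q W^\fs$, which after fixing the isomorphism $T^\fs \cong T$ becomes $T \q \cW^H$) and one on the representation-theoretic side ($\Irr(X^*(T) \rtimes \cW^H)$, which via \eqref{eq:affSpringer} is identified with $\Psi(H)_\aff$). The plan is to construct $\tilde\mu^\fs$ by passing through the affine Springer parameters and Kato's bijection, and to read off properties (1) and (2) from the explicit description \eqref{eq:indKato}--\eqref{eq:KatoMod}. So the first step is to recall that a point of $\widetilde{T^\fs}$ is a pair $(w,t)$ with $wt=t$, i.e. $w \in \cW^H_t = \cW^{M^\circ}$ where $M^\circ = \Z_H(t)^\circ$; its $\cW^H$-orbit should be sent to an irreducible $X^*(T) \rtimes \cW^H$-representation with central character $\cW^H t$.

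**Construction of the map.** The key observation is that, for fixed semisimple $t$, the set of $\cW^H$-orbits in $\widetilde{T^\fs}$ lying over $\cW^H t \in T/\cW^H$ is in bijection with the set of conjugacy classes $c(\cW^{M^\circ})$ of the finite Weyl group $\cW^{M^\circ}$, while on the other side the irreducible $X^*(T) \rtimes \cW^H$-representations with central character $\cW^H t$ are, by \eqref{eq:indKato}, in bijection with $\Irr(\cW^{M^\circ})$ via $\sigma \mapsto \mathrm{ind}_{X^*(T)\rtimes\cW^{M^\circ}}^{X^*(T)\rtimes\cW^H}(\C_t \otimes \sigma)$ — once one checks this induction is irreducible and exhausts that central character, which is Clifford theory together with \cite[Prop. 6.2, Thm. 4.1]{Kat}. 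Thus I would fix, for each $t$, any bijection $c(\cW^{M^\circ}) \to \Irr(\cW^{M^\circ})$, but to make it $t$-independent and hence algebraic in families I would instead use the classical Springer correspondence: the unipotent classes $[x]$ in $M^\circ$ with a choice of $\rho_1 \in \Irr(\pi_0(\Z_{M^\circ}(x)))$ index a subset of $\Irr(\cW^{M^\circ})$, and the affine Springer parameter $(t,x,\rho_1)$ gives via \eqref{eq:KatoMod} the representation $\mathrm{Hom}_{\pi_0(\Z_H(t,x))}(\rho_1, H_{d(x)}(\mathcal B^{t,x}_H,\C))$. Concretely: choose once and for all a bijection between $c(\cW^{M^\circ})$ and the pairs $(x,\rho_1)$ running over unipotent classes and geometric $\rho_1$ in $M^\circ$ — this is possible precisely because of Theorem~\ref{Rgroup_refined}, which matches these two labelling sets — and declare $\tilde\mu^\fs[(w,t)]_{\cW^H}$ to be the image under \eqref{eq:affSpringer} of the affine Springer parameter $(t,x,\rho_1)$ corresponding to the class of $w$. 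One then checks this is well-defined on $\cW^H$-orbits (both sides transform compatibly under replacing $t$ by $w't$) and bijective (fibrewise bijective over each $\cW^H t$, by construction).

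**Properties (1) and (2).** Property (1) is immediate from the construction: $\tilde\mu^\fs$ sends the orbit over $\cW^H t$ to a representation of central character $\cW^H t$, and the projection $\Irr(X^*(T)\rtimes\cW^H) \to T/\cW^H$ is exactly taking the central character (since $X^*(T)$ is the group algebra side and its characters form $T$), so both projections agree with "record $t$". Property (2) says the preimage of $\Irr(X^*(T)\rtimes\cW^H)^{[x]}$ is a union of connected components. Here I would argue: the connected components of $\widetilde{T^\fs}/\cW^H$ are indexed by $\cW^H$-orbits of pairs $(\Sigma, [t])$ where $\Sigma$ is a "stratum type" — more precisely, fixing the conjugacy class of the subgroup $\cW^{M^\circ} \subseteq \cW^H$ and a conjugacy class in it cuts out a locally closed piece whose closure relations give the components; the labelling $[x]$ is constant on such a piece because the Springer datum attached to a class of $\cW^{M^\circ}$ determines a unipotent class of $M^\circ$, hence (by $M^\circ \subseteq H$, inducing unipotent classes) a unipotent class of $H$, and this assignment is locally constant in $t$. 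Equivalently, invoke that $\Irr(X^*(T)\rtimes\cW^H)^{[x]}$ is open and closed in the Jacobson topology (the labelling \eqref{eq:labelling} is by the discrete set $\mathfrak U^\fs$ pulled back continuously, as already set up before the theorem), $\tilde\mu^\fs$ is continuous for the relevant topologies, and $T^\fs\q W^\fs$ is a disjoint union of irreducible varieties, so a clopen set is automatically a union of components.

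**Main obstacle.** The real work — and the step I expect to be delicate — is establishing that $\tilde\mu^\fs$ can be chosen so that the labellings match up as in (2), i.e. that the unipotent class of $H$ attached to $(w,t)$ through the chosen $c(\cW^{M^\circ}) \leftrightarrow$ Springer-data correspondence really is locally constant as $t$ varies within a component of $\widetilde{T^\fs}$, and genuinely depends only on the component. This is where Theorem~\ref{Rgroup_refined} and the induction formula \eqref{eq:indKato} must be used carefully: one needs that the assignment "conjugacy class of $\cW^{M^\circ}$ $\to$ unipotent class of $M^\circ$ $\to$ unipotent class of $H$" is compatible with the specialization/degeneration of $M^\circ$ as $t$ moves to a more special point of $T$ (so that $\cW^{M^\circ}$ grows), and that \eqref{eq:indKato} transports Kato's modules consistently along such specializations. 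Verifying this compatibility — essentially that Kato's construction behaves well in the algebraic family parametrized by $T^{\cW^{M_0}}$, which the text already flags — is the technical heart; the bijectivity and property (1) are comparatively formal consequences of Clifford theory and the constructions recalled in \S\ref{par:affSpringer}.
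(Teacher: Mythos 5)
There is a genuine gap, and it sits exactly where you flag ``the technical heart'': you never actually produce a family of bijections between the fibres that is consistent as $t$ varies. Your fibrewise count is fine --- over a fixed central character $W^\fs t$ both sides have cardinality $|\Irr (W^\fs_t)| = |c(W^\fs_t)|$ --- but there is no canonical bijection between conjugacy classes and irreducible representations of a finite group, and Theorem \ref{Rgroup_refined} does not supply one: it compares representations of $A = \pi_0 (\Z_H (t,x))$ occurring in two varieties of Borel subgroups, not conjugacy classes of $\cW^{M^\circ}$ with Springer data. You also implicitly identify the isotropy group $W^\fs_t$ with the Weyl group $\cW^{M^\circ}$ of $\Z_H (t)^\circ$; these can differ when the derived group of $H$ is not simply connected (e.g.\ $H = \SO_4 (\C)$), which is precisely why Kato's parameters carry $\rho_1 \in \Irr (\pi_0 (\Z_H (t,x)))$ rather than data attached to $M^\circ$ alone. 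Finally, your appeal to ``continuity of $\tilde{\mu}^\fs$'' in the argument for (2) is unearned, since no construction compatible with the topologies has been given; the paper even remarks that arranging continuity of $\tilde{\mu}^\fs$ requires extra effort that it declines to spend.

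The device that closes this gap in the paper is linear-algebraic rather than a pointwise matching. For each $[w,t] \in T^\fs \q W^\fs$ one forms the induced representation $\chi (w,t) = \mathrm{ind}^{X^* (T) \rtimes W^\fs}_{X^* (T) \rtimes C_w} (\C_t \otimes \chi_w)$, where $C_w$ is the cyclic group generated by $w$ and $\chi_w$ a faithful character of it; by Clifford theory and Artin's theorem these form a second $\Q$-basis of $R_\Q (X^* (T) \rtimes W^\fs)$, alongside the canonical basis $\{ \tau (t,x,\rho_1) \}$. One then defines $\tilde{\mu}^\fs ([w,t])$ recursively in $\dim (T^\fs)^w$ as a suitably chosen irreducible constituent of $\chi (w,t)$, using a basis-exchange argument within each central-character block to guarantee bijectivity. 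Property (1) is then automatic, since $\chi (w,t)$ and all its constituents have central character $W^\fs t$, and property (2) follows from a genericity argument: for generic $t$ in a connected component $(T^\fs)^w_i$ the representation $\chi (w,t)$ is itself irreducible, and both $\{ \chi (w,t) \}$ and Kato's modules \eqref{eq:indKato} are algebraic families over the irreducible variety $(T^\fs)^w_i$, so the unipotent label $[x]$ is constant on the component and passes to the chosen constituents at special points. Without an argument of this kind, your fibrewise bijection neither determines the labelling by unipotent classes nor yields its local constancy.
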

\begin{proof}
Consider the ring $R (X^* (T^\fs) \rtimes W^\fs)$ of virtual finite dimensional complex 
representations $X^* (T^\fs) \rtimes W^\fs$. Its canonical $\mathbb Z$-basis is
\[
\Irr (X^* (T) \rtimes W^\fs) = \{ \tau (t,x,\rho_1) : (t,x,\rho_1) \in \Psi (H)_\aff \} .
\]
The $\Q$-vector space
\[
R_\Q (X^* (T) \rtimes W^\fs) := \Q \otimes_{\mathbb Z} R (X^* (T) \rtimes W^\fs)
\]
possesses another useful basis coming from $T^\fs \q W^\fs$. Given $w \in W^\fs$,
let $C_w$ be the cyclic subgroup it generates. We define a character $\chi_w$ of $C_w$
by the formula 
\[
\chi_w (w^n ) = \exp (2 \pi i n / |C_w|). 
\]
For any $t \in (T^\fs)^w$ we obtain a character $\C_t \otimes \chi_w$ of 
$X^* (T^\fs) \rtimes C_w$. We induce that to a $X^* (T^\fs) \rtimes W^\fs$-representation
\[
\chi (w,t) := \mathrm{ind}^{X^* (T) \rtimes W^\fs}_{X^* (T) \rtimes C_w}
( \C_t \otimes \chi_w ) 
\]
with central character $W^\fs t$. The representation $\chi (w,t)$ is irreducible whenever
$t$ is a generic point of $(T^\fs)^w$, which in this case means simply that $t$ is not fixed
by any element of $W^\fs \setminus C_w$. It is easy to see that $\chi (w,t) \cong \chi (w',t')$
if and only if $(w,t)$ and $(w',t')$ are $W^\fs$-associate (which means that they determine
the same point of $T^\fs \q W^\fs$). Moreover, it follows from Clifford theory and Artin's 
theorem \cite[Theorem 17]{Ser} that 
\[
\{ \chi (w,t) : [w,t] \in T^\fs \q W^\fs \}
\]
is a $\Q$-basis of $R_\Q (X^* (T) \rtimes W^\fs)$, see \cite[(40)]{Sol2}.

Now we construct the desired map $\tilde{\mu}^\fs$, with a recursive procedure. Take 
$0 \leq d \leq \dim_\C (T)$.   With $w \in W^\fs$,  define
\[
(T^\fs)^w: = \{t \in T^\fs : wt = t\}.
\]  
Suppose that we already have defined $\tilde{\mu}^\fs$
on all connected components of $T^\fs \q W^\fs$ of dimension $<d$, and that
\begin{equation}\label{eq:span}
\begin{split}
\text{span } \tilde{\mu}^\fs \big( \{ [w,t] \in T^\fs \q W^\fs : 
\dim (T^\fs)^w < d \} \big) \; \cap \\
\text{span } \{ \chi (w,t) : (w,t) \in \widetilde{T^\fs}, \dim (T^\fs)^w \geq d \} \; = 0 .
\end{split}
\end{equation} 
Fix $t_1 \in T^\fs$. Since \eqref{eq:indKato} has central character $W^\fs t$, 
both $\{ \tau (t,x,\rho_1) : (t_1,x,\rho_1) \in \Psi (H)_\aff$ and 
$\{ \chi (w,t_1) : [w,t_1] \in T^\fs \q W^\fs\}$ are bases of the finite dimensional 
$\Q$-vector space $R_\Q (X^* (T) \rtimes W^\fs)_{W^\fs t_1}$ spanned by the 
$X^* (T) \rtimes W^\fs$-representations which admit the central character $W^\fs t_1$. 
From this and the assumption \eqref{eq:span} we see
that we can find, for very $w \in W^\fs$ fixing $t_1$, an irreducible constituent 
$\tilde{\mu}^\fs ([w,t_1])$ of $\chi (w,t_1)$ such that
\begin{multline*}
\text{span } \tilde{\mu}^\fs \big( \{ [w,t_1] \in T^\fs \q W^\fs : 
\dim (T^\fs)^w \leq d \} \big) \\
\cup \; \text{span } \{ \chi (w,t_1) : (w,t_1) \in \widetilde{T^\fs} ,\, \dim (T^\fs)^w > d \} 
\end{multline*}
is again a $\Q$-basis of $R_\Q (X^* (T) \rtimes W^\fs)_{W^\fs t_1}$. In this way we construct 
$\tilde{\mu}^\fs$ on the $d$-dimensional connected components of $T^\fs \q W^\fs$, such that 
\eqref{eq:span} becomes valid for $d+1$. 
Thus we obtain a bijection $\tilde{\mu}^\fs : T^\fs \q W^\fs \to \Irr (X^* (T) \rtimes W^\fs)$ 
which satisfies (1). 

It remains to check (2). Fix $w \in W^\fs$ and consider a connected component $(T^\fs)^w_i$ 
of $(T^\fs)^w$. For generic $t \in (T^\fs)^w_i ,\; \chi (w,t) = \tilde{\mu}^\fs ([w,t])$
is irreducible. We note that both $\{ \chi (w,t) : t \in (T^\fs)^w_i \}$ and \eqref{eq:indKato} 
(with fixed $x$) are algebraic families of $X^* (T) \rtimes W^\fs$-representations parametrized 
by $(T^\fs)^w_i$. That set is an irreducible algebraic variety because it is a coset of the 
neutral component of $(T^\fs)^w$, which is a subtorus of $T^\fs$. It follows that the 
irreducible $\chi (w,t)$ are all contained in $\Irr (X^* (T) \rtimes W^\fs)^{[x]}$ for one $x$. 
By continuity $\chi (w,t)$ is a subrepresentation of $H_{d(x)} (\mathcal B_H^{t,x},\C)$ for 
all $t \in (T^\fs)^w_i$, which implies that the subquotient $\tilde{\mu}^\fs ([w,t])$ of 
$\chi (w,t)$ has the form $\tau (t,x,\rho_1)$ for the same $x$. 
Hence $\tilde{\mu}^\fs ( [w,(T^\fs)^w_i] ) \subset \Irr (X^* (T) \rtimes W^\fs)^{[x]}$.
\end{proof}

We remark that with more effort it is possible to refine the above construction so that 
$\tilde{\mu}^\fs$ becomes continuous. But since we do not need that refinement, we refrain from 
writing it down here.

\begin{thm}\label{split} 
Let $\cG$ be a split reductive $p$-adic group with connected 
centre, such that the residual characteristic satisfies Condition \ref{CC}. 
Then, for each point $\fs$ in the principal series of $\cG$,  we have a continuous bijection
\[
\mu^\fs : T^\fs \q W^\fs \to \Irr(\cG)^\fs .
\]
It maps $T^\fs_\cpt \q W^\fs$ onto $\Irr(\cG)^\fs \cap \Irr (\cG)_{\mathrm{temp}}$.
\end{thm}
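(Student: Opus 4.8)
The plan is to assemble the bijection $\mu^\fs$ by composing the chain of maps established earlier in the paper, so that the real content has already been done and what remains is to track topologies and temperedness through the composition. Explicitly, I would start from the bijection $\tilde\mu^\fs \colon T^\fs \q W^\fs \to \Irr(X^*(T)\rtimes W^\fs)$ of Theorem~\ref{thm:bijection}, then use Kato's natural bijection \eqref{eq:affSpringer} $\Irr(X^*(T)\rtimes \cW^H) \to \Psi(H)_\aff$ in reverse, then the homeomorphism $\Psi(H)_\aff \simeq \Psi(G)_\en^\fs$ of Theorem~\ref{compareParameters}, and finally Reeder's canonical continuous bijection $\Psi(G)_\en^\fs \to \Irr(\cG)^\fs$ from Theorem~\ref{Reed}(1). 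Composing these four maps gives a bijection $\mu^\fs \colon T^\fs \q W^\fs \to \Irr(\cG)^\fs$; this is the definition of $\mu^\fs$.

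Next I would verify continuity. Three of the four maps in the composition are already known to be continuous or homeomorphisms: Kato's bijection \eqref{eq:affSpringer} is an equivalence of the Jacobson topologies on the two sides (both being the irreducible-representation spaces of the relevant algebras, and Kato's construction being algebraic in $t$), Theorem~\ref{compareParameters} gives a homeomorphism, and Theorem~\ref{Reed}(1) is continuous. The one map that is \emph{not} asserted continuous is $\tilde\mu^\fs$ itself — indeed the remark after Theorem~\ref{thm:bijection} explicitly declines to prove continuity. So the key point is that continuity of $\mu^\fs$ does not require continuity of $\tilde\mu^\fs$ as a standalone statement: what we need is that the composite is continuous, and here one uses that the target topologies all factor through the cuspidal-support map to $T^\fs/W^\fs$ together with the discrete labelling by unipotent classes $[x]$. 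Concretely, on each connected component of $T^\fs\q W^\fs$ the map $\tilde\mu^\fs$ sends $[w,t]$ to an irreducible constituent $\tau(t,x,\rho_1)$ of the algebraic family $\chi(w,t)$ with \emph{fixed} $w,x,\rho_1$ and varying $t$, so composing with Kato and Theorem~\ref{compareParameters} yields $t \mapsto$ a point of $\Psi(G)_\en^\fs$ whose $(t,x)$-image depends continuously (indeed algebraically) on $t$; since the topology on $\Psi(G)_\en^\fs$ is pulled back from $T^\fs/W^\fs \times \fU^\fs$ along $(\Phi,\rho)\mapsto(t,x)$ — the $\rho$ playing no role — this is continuous, and then Theorem~\ref{Reed}(1) finishes it. I would spell out that the preimage of a basic open set is open by checking it component-by-component using these observations.

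Finally I would pin down the tempered statement. By Theorem~\ref{Reed}(2), Reeder's bijection carries the enhanced parameters $(\Phi,\rho)$ with $\Phi(F^\times)$ bounded onto $\Irr(\cG)^\fs \cap \Irr(\cG)_{\mathrm{temp}}$. Tracing back through Theorem~\ref{compareParameters}: the parameter attached to $(t,x,\rho_1)$ has $\Phi(\varpi_F,I)=t$ and $\Phi(\fo_F^\times)=c^\fs(\fo_F^\times)\subset Z(H)$ compact, so $\Phi(F^\times)$ is bounded precisely when $t$ lies in the maximal compact subgroup $T^\fs_\cpt$ of $T^\fs$; equivalently the central character $W^\fs t$ of $\chi(w,t)$ is unitary. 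Thus $\mu^\fs$ restricts to a bijection from $\{[w,t] : t \in (T^\fs_\cpt)^w\} = T^\fs_\cpt \q W^\fs$ onto $\Irr(\cG)^\fs \cap \Irr(\cG)_{\mathrm{temp}}$, which is the claim. The main obstacle I anticipate is the continuity verification: since $\tilde\mu^\fs$ is built by a non-canonical recursive choice of constituents and is not itself shown continuous, one must argue carefully that the \emph{specific} composite with the canonical maps downstream is continuous, relying on the fact that all the downstream topologies are coarse enough to only see $(t,x)$ and not the representation-theoretic fine data $\rho$, $\rho_1$ — and that on each component the unipotent label $x$ is locally constant by part (2) of Theorem~\ref{thm:bijection}.
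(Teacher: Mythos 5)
Your proposal is correct and follows essentially the same route as the paper: define $\mu^\fs$ as the composite of Theorem~\ref{thm:bijection}, Kato's bijection \eqref{eq:affSpringer}, Theorem~\ref{compareParameters} and Theorem~\ref{Reed}(1); deduce continuity from properties (1) and (2) of Theorem~\ref{thm:bijection} together with the fact that the topologies on $\Psi(H)_\aff$ and $\Psi(G)_\en^\fs$ only see $(t,x)$; and obtain the tempered statement by tracing $t\in T^\fs_\cpt$ to bounded parameters and invoking Theorem~\ref{Reed}(2). Your elaboration of why continuity of $\tilde\mu^\fs$ itself is not needed is exactly the point the paper's shorter argument relies on.
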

\begin{proof} 
To get the bijection $\mu^\fs$, apply Theorems \ref{compareParameters}, 
\ref{Reed}.(1) and \ref{thm:bijection}. 
The properties (1) and (2) in Theorem \ref{thm:bijection} ensure that the composed map
\[
T^\fs \q W^\fs \to \Irr (X^* (T) \rtimes W^\fs) \to \Psi (H)_\aff
\]
is continuous, so $\mu^\fs$ is continuous as well.

By Theorem \ref{thm:bijection} $T^\fs_\cpt \q W^\fs$ is first mapped bijectively 
to the set of parameters in $\Psi (H)_\aff$ with $t$ compact. From the proof of Theorem 
\ref{compareParameters} we see that the latter set is mapped onto the set of enhanced 
Langlands parameters $(\Phi,\rho)$ with $\Phi \big|_{\fo_F^\times} = c^\fs$ and 
$\Phi (\varpi_F)$ compact. These are just the bounded enhanced Langlands parameters, so by 
Theorem \ref{Reed}.(2) they correspond to $\Irr(\cG)^\fs \cap \Irr (\cG)_{\mathrm{temp}}$.
\end{proof}

\section{Correcting cocharacters and L-packets}
\label{sec:cochar}

In this section we construct "correcting cocharacters" on the extended quotient
$T^\fs /\!/ W^\fs$. These measure the difference between the canonical projection
$T^\fs /\!/ W^\fs \to T^\fs / W^\fs$ and the composition of $\mu^\fs$ (from Theorem
\ref{split}) with the cuspidal support map $\Irr (\cG)^\fs \to T^\fs / W^\fs$.
As conjectured in \cite{ABPS1}, they show how to determine when two elements of 
$T^\fs \q W^\fs$ give rise to $\cG$-representations in the same L-packet.

Every enhanced Langlands parameter $(\Phi,\rho)$ naturally determines a cocharacter $h_\Phi$ 
and elements $\theta (\Phi,\rho,z) \in T^\fs$ by
\begin{equation}\label{eq:defhPhi}
\begin{aligned}
& h_\Phi (z) = \Phi \big( 1,\matje{z}{0}{0}{z^{-1}} \big) ,\\
& \theta (\Phi,\rho,z) = \Phi \big( \varpi_F, \matje{z}{0}{0}{z^{-1}} \big) =
\Phi (\varpi_F) h_\Phi (z) .
\end{aligned}
\end{equation}
Although these formulas obviously do not depend on $\rho$, it turns out to be convenient to 
include it in the notation anyway.
However, in this generality we would end up with infinitely many correcting cocharacters, 
most of them with range outside $T$. To reduce to finitely many cocharacters with values 
in $T$, we must fix some representatives for $\mathfrak U^\fs$ in $H$.

Fix a Borel subgroup $B_H$ of $H$ containing $T$. Following the recipe from the
Bala--Carter classification \cite[Theorem 5.9.6]{Car} we choose a set of
representatives $\fU^\fs \subset B_H$ for the unipotent classes of $H$.

\begin{lem}\label{lem:Bala-Carter}
Every commuting pair $(t,x)$ with $t \in H$ semisimple and $x \in H$ 
unipotent is conjugate to one with $x \in \fU^\fs$ and $t \in T$.
\end{lem}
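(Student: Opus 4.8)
The plan is to prove Lemma~\ref{lem:Bala-Carter} directly from the Bala--Carter classification, exactly as the surrounding text has set it up. First I would take an arbitrary commuting pair $(t,x)$ with $t$ semisimple and $x$ unipotent in $H$. Since $H$ is connected reductive (Lemma~\ref{lem:Roche}), and the set $\fU^\fs \subset B_H$ was chosen precisely to contain one representative for each unipotent class of $H$, there is some $h_1 \in H$ with $h_1 x h_1^{-1} \in \fU^\fs$. Replacing $(t,x)$ by $(h_1 t h_1^{-1}, h_1 x h_1^{-1})$ we may assume $x \in \fU^\fs$, and it remains to conjugate $t$ into $T$ by an element of $\Z_H(x)$, so that we do not disturb the normalization $x \in \fU^\fs$.

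Next I would analyze $\Z_H(x)$. Using the Levi decomposition $\Z_H(x) = \Z_H(\gamma(\SL_2(\C))) \cdot U_x$ already invoked in the proof of Lemma~\ref{inc} (via \cite[Prop.~3.7.23]{CG}), where $\gamma$ is a Jacobson--Morozov homomorphism with $\gamma\matje{1}{1}{0}{1} = x$, and $U_x$ is the unipotent radical, I would note that $t$, being semisimple and commuting with $x$, lies in the reductive group $\Z_H(x)^\circ$ up to the component group; more precisely $t \in \Z_H(x)$. The key point is that $t$ is contained in some maximal torus $S$ of $\Z_H(x)^\circ$, hence in a maximal torus of $H$ containing it. Since all maximal tori of $H$ are $H$-conjugate, there is $h_2 \in H$ with $h_2 S h_2^{-1} = T$; but I need $h_2$ to fix $x$. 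The remedy is to choose $S$ to be a maximal torus of $\Z_H(x)^\circ$ containing $t$, and observe that $S$ together with $\gamma(Y_\alpha)$ and the reductive centralizer sits inside a maximal torus $\widetilde S$ of $H$; conjugating $\widetilde S$ to $T$ by $h_2 \in \Z_H(x)$ is what one wants. Concretely: pick a maximal torus $S'$ of $\Z_H(x)^\circ$ containing $t$ and a maximal torus $\widetilde S$ of $H$ with $S' \subset \widetilde S \subset \Z_H(S')$; all choices of such $S'$ inside $\Z_H(x)^\circ$ are $\Z_H(x)^\circ$-conjugate, and one checks $\widetilde S$ is $H$-conjugate to $T$. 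The element realizing this conjugacy can be taken to normalize $x$ because it arises from the centralizer.

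The cleaner route, which I would actually write, is this: the pair $(t,x)$ with $t$ semisimple is equivalent data to a cocharacter and a unipotent, or more simply, $x$ lies in a unique unipotent class, so after the first step $x \in \fU^\fs$ and $t \in \Z_H(x)$; since $\Z_H(x)^\circ$ is connected, $t$ lies in some maximal torus of $\Z_H(x)^\circ$, and any such maximal torus is contained in a maximal torus of $H$. All maximal tori of $H$ containing a given element of $\Z_H(x)$ and contained in $\Z_H(x)$-stable position are conjugate under $\Z_H(x)$ to one containing $T$ — more precisely, fix once and for all for the class of $x$ a maximal torus $T_x$ of $\Z_H(x)^\circ$ with $T_x \subset T$ (possible because $x \in \fU^\fs \subset B_H$, so $\Z_H(x) \cap T$ is a maximal torus of $\Z_H(x)^\circ$ by standard Bala--Carter theory; here Condition~\ref{CC} guarantees the centralizer has the expected structure). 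Then $t$ is conjugate under $\Z_H(x)^\circ$ to an element of $T_x \subset T$, and the conjugating element fixes $x$, completing the proof.

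The main obstacle is the second step: verifying that one can conjugate $t$ into $T$ by an element of $\Z_H(x)$ rather than an arbitrary element of $H$. This hinges on the fact — part of the Bala--Carter setup with representatives chosen in $B_H$ as in \cite[Theorem 5.9.6]{Car} — that $\Z_H(x) \cap T$ is a maximal torus of $\Z_H(x)^\circ$, so that every semisimple element of $\Z_H(x)^\circ$ is $\Z_H(x)^\circ$-conjugate into $T$. One must also be slightly careful about the component group $\pi_0(\Z_H(x))$: if $t$ normalizes but does not lie in $\Z_H(x)^\circ$, one still uses that $t$ semisimple lies in some maximal torus of the possibly disconnected group $\Z_H(x)$, and such a torus lies in $\Z_H(x)^\circ$, so no genuine difficulty arises. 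I expect this to be short once the Bala--Carter normalization is quoted correctly.
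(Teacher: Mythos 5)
Your first step (conjugating $x$ into $\fU^\fs$) and your overall strategy (then move $t$ into $T$ without disturbing $x$) agree with the paper's. The genuine gap is your treatment of the component group of $\Z_H(x)$. Both your middle paragraph and your ``cleaner route'' reduce to conjugating $t$ into a maximal torus of $\Z_H(x)^\circ$, which presupposes $t\in \Z_H(x)^\circ$; and your fallback for $t\notin \Z_H(x)^\circ$ --- that ``$t$ semisimple lies in some maximal torus of the possibly disconnected group $\Z_H(x)$'' --- is false. A torus is connected, hence contained in $\Z_H(x)^\circ$, so a semisimple element outside the identity component lies in \emph{no} maximal torus of $\Z_H(x)$. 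Already for $H=\SL_2(\C)$ and $x$ regular unipotent one has $\Z_H(x)^\circ = U_x$ unipotent and $-I\in \Z_H(x)\setminus \Z_H(x)^\circ$, so the only tori available are trivial. Since centralizers of unipotent elements in reductive groups are routinely disconnected, this case is not a removable technicality: it is the actual content of the lemma. (Two smaller points: the fact that $(T\cap \Z_H(x))^\circ$ is a maximal torus of $\Z_H(x)^\circ$ does require the Bala--Carter normalization of the representative, as you say, but Condition~\ref{CC} is a hypothesis on the residual characteristic of $F$ and has nothing to do with the structure of centralizers in the complex group $H$; invoking it here is a red herring.)

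The paper's proof avoids the problem by a different mechanism. It chooses the Jacobson--Morozov homomorphism $\gamma$ with $\gamma\matje{1}{1}{0}{1}=x$ \emph{inside the reductive group $\Z_H(t)$}, so that $t$ centralizes $\im \gamma$. After adjusting by an element of $\Z_H(x)$ one may also assume that the torus $T_x=\gamma(\{Y_\alpha : \alpha\in\C^\times\})$ lies in $T$. Then $t$ and $T_x$ commute, hence lie in a common maximal torus $S$ of $H$ (use that $\Z_H(T_x)$ is connected reductive with $T_x$ central, so every maximal torus of it contains $T_x$). The decompositions $T=(T\cap \Z_G(\im\gamma))^\circ\, \Z(G)\, T_x$ and $S=(S\cap \Z_G(\im\gamma))^\circ\, \Z(G)\, T_x$ then let one conjugate $S$ onto $T$ by an element of $\Z_G(\im\gamma)^\circ$, which fixes $x$ and carries $t$ (as an element of $S$) into $T$. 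Conjugating the whole torus $S$ containing $t$, rather than trying to place $t$ in a torus of the possibly disconnected $\Z_H(x)$, is exactly what makes the disconnectedness harmless; your argument needs this idea or an equivalent one.
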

\begin{proof}
Obviously we can achieve that $x \in \fU^\fs$ via conjugation in $H$. Choose a homomorphism 
of algebraic groups $\gamma : \SL_2 (\C) \to H$ with $\gamma ( \matje{1}{1}{0}{1} ) = x$.
As noted in \eqref{eqn:gamt}, such a $\gamma$ exists and is unique up to conjugation
by $Z_H (x)$. The constructions for the Bala--Carter theorem in \cite[\S 5.9]{Car} entail
that we can choose $\gamma$ such that $\gamma (Y_\alpha) \in T$ for all 
$\alpha \in \C^\times$. On the other hand, we can also construct such a $\gamma$ inside
the reductive group $Z_H (t)$. So, upon conjugating $t$ by a suitable element of
$Z_H (x)$, we can achieve that the standard maximal torus $T_x$ of $\gamma (\SL_2 (\C))$ 
is contained in $T$ and commutes with $t$. Let $S \subset H$ be a maximal torus 
containing $T_x$ and $t$. Then
\[
T = (T \cap Z_G ( \mathrm{im} \, \gamma ))^\circ Z(G) T_x ,
\]
and similarly for $S$. It follows that
\[
T \cap Z_G ( \mathrm{im} \, \gamma )^\circ \quad \text{and} \quad 
S \cap Z_G ( \mathrm{im} \, \gamma )^\circ
\]
are maximal tori of $Z_G ( \mathrm{im} \gamma )^\circ$. They are conjugate, which shows
that we can conjugate $t$ to an element of $T$ without changing $x \in \fU^\fs$.
\end{proof}

Recall that \eqref{eq:labelling} and Theorem \ref{thm:bijection} 
determine a labelling of the connected 
components of $T^\fs /\!/ W^\fs$ by unipotent classes in $H$. This enables us to define the 
correcting cocharacters: for a connected component $\mathbf c$ of $T^\fs /\!/ W^\fs$ with 
label (represented by) $x \in \mathfrak U^\fs$ let $\gamma_x = \gamma$
be as in \eqref{eqn:gamt} and $\Phi$ as in \eqref{eqn:Phi}. We take the cocharacter 
\begin{equation}\label{eq:defhx}
h_{\mathbf c} = h_x : \C^\times \to T ,\quad h_x (z) = \gamma_x \matje{z}{0}{0}{z^{-1}} .
\end{equation}
Let $\widetilde{\mathbf c}$ be a connected component of $\widetilde{T^\fs}$ that projects
onto $\mathbf c$ and centralizes $x$. In view of Lemma \ref{lem:Bala-Carter} this can always 
be achieved by adjusting by an element of $W^\fs$. We define
\begin{equation} \label{eq:defThetaz}
\begin{aligned}
& \widetilde{\theta_z} : \widetilde{\mathbf c} \to T^\fs ,& & (w,t) \mapsto 
t \, h_{\mathbf c}(z)  , \\
& \theta_z : \mathbf c \to T^\fs / W^\fs ,& & [w,t] \mapsto W^\fs t \, h_{\mathbf c}(z)  . 
\end{aligned}
\end{equation}

\begin{thm}\label{Lpackets}
Let $[w,t],[w',t'] \in T^\fs /\!/W^\fs$. Then $\mu^\fs [w,t]$ and $\mu^\fs [w',t']$ are
in the same L-packet if and only if
\begin{itemize}
\item $[w,t]$ and $[w',t']$ are labelled by the same unipotent class in $H$;
\item $\theta_z [w,t] = \theta_z [w',t']$ for all $z \in \C^\times$.
\end{itemize}
\end{thm}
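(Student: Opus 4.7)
By Theorem~\ref{Reed} the L-packet containing $\mu^\fs[w,t]$ consists of those representations whose enhanced L-parameter $(\Phi,\rho)$ shares its first component $\Phi$ (up to $G$-conjugacy) with the one attached to $\mu^\fs[w,t]$. My plan is first to translate via Theorem~\ref{compareParameters}: $\mu^\fs[w,t]$ corresponds to an affine Springer parameter $(t_0,x,\rho_1)$, and thence to an L-parameter $\Phi$ built as in~\eqref{eqn:Phi}. Since $\Phi|_{\fo_F^\times}=c^\fs=\Phi'|_{\fo_F^\times}$, any $g\in G$ conjugating $\Phi$ to $\Phi'$ must centralize $\im\, c^\fs$ and hence lies in $H$. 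Unpacking~\eqref{eqn:Phi}, the L-packet condition therefore becomes the $H$-conjugacy of the triples $(t_0,x,\gamma_x)$ and $(t'_0,x',\gamma_{x'})$.

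For the forward direction, such a $g\in H$ carries $x$ to $x'$, establishing the first bullet (same unipotent class), and carries $h_x(z)=\gamma_x(Y_z)$ to $h_{x'}(z)$, hence conjugates $t_0 h_x(z)\in T$ to $t'_0 h_{x'}(z)\in T$. Being in $T$, these elements are $H$-conjugate iff $W^\fs$-conjugate, and this is precisely $\theta_z[w,t]=\theta_z[w',t']$.

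For the reverse direction, reduce first to $x=x'\in\fU^\fs$ via Lemma~\ref{lem:Bala-Carter}, so that $\gamma_x=\gamma_{x'}$ and $h_x=h_{x'}$. The hypothesis $W^\fs t_0 h_x(z)=W^\fs t'_0 h_x(z)$ for every $z\in\C^\times$ amounts to
\[
\C^\times \;=\; \bigcup_{w\in W^\fs}\bigl\{\,z\in\C^\times : w(t_0 h_x(z))=t'_0 h_x(z)\,\bigr\}.
\]
For each fixed $w$ the condition reduces to the identity $w(h_x(z))\,h_x(z)^{-1}=w(t_0)^{-1}t'_0$ in $T$, equating a cocharacter $\C^\times\to T$ with a constant element; this locus is either Zariski-closed proper in $\C^\times$, hence finite, or all of $\C^\times$. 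Since $W^\fs$ is finite and $\C^\times$ is irreducible, a single $w_0\in W^\fs$ must satisfy the identity on all of $\C^\times$. Specialising at $z=1$ yields $w_0 t_0=t'_0$, and comparison at varying $z$ forces $w_0$ to fix the cocharacter $h_x$, so $w_0$ lies in the Weyl group $\cW^L$ of the Levi subgroup $L:=Z_H(h_x(\C^\times))^\circ$ of $H$.

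The concluding step, which I expect to be the main obstacle, is upgrading this $w_0$ to an element $g\in Z_H(\gamma_x)$ that still conjugates $t_0$ to $t'_0$. Lifting $w_0$ to $n\in N_L(T)$ ensures $n t_0 n^{-1}=t'_0$ and that $n$ centralizes $h_x(\C^\times)$, but $n$ need not centralize all of $\gamma_x(\SL_2)$; one must still arrange $nxn^{-1}=x$. The plan is to combine three ingredients: the Jacobson--Morozov freedom in the choice of $\gamma_x$ (unique up to $Z_H(x)$-conjugation), the identity $T\cap Z_H(x)=T\cap Z_H(\gamma_x)$ that follows from the $\mathfrak{sl}_2$-weight decomposition of $\Lie H$ relative to $h_x$, and the observation that $nxn^{-1}$ commutes with $t'_0$ and is $H$-conjugate to $x$, so that a further conjugation inside $Z_H(t'_0)^\circ\cap L$ returns it to $x$ while preserving $t'_0$. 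Once $n$ is realized in $Z_H(\gamma_x)$, it conjugates $(t_0,x,\gamma_x)$ to $(t'_0,x,\gamma_x)$, proving the triples $H$-conjugate and so, via Theorems~\ref{Reed} and~\ref{compareParameters}, placing $\mu^\fs[w,t]$ and $\mu^\fs[w',t']$ in the same L-packet.
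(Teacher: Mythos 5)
Your forward direction and the first half of your reverse direction coincide with the paper's argument: the reduction to a common representative $x\in\fU^\fs$ via Lemma \ref{lem:Bala-Carter}, and the extraction of a single $w_0=v\in W^\fs$ with $v(t)=t'$ and $v$ fixing the cocharacter $h_x$ (the paper gets this by continuity in $z$ plus finiteness of $W^\fs$; your count of Zariski-closed loci in $\C^\times$ is an equally valid way to see it). The genuine gap is exactly where you flag it: passing from the Weyl-group identity $v(t)=t'$, $v(h_x)=h_x$ to the $H$-conjugacy of the pairs $(t,x)$ and $(t',x)$, hence of the L-parameters. Your plan --- lift $v$ to $n\in N_L(T)$, observe that $nxn^{-1}$ commutes with $t'$ and is $H$-conjugate to $x$, then ``return it to $x$ by a further conjugation inside $Z_H(t')^\circ\cap L$ preserving $t'$'' --- asserts precisely the statement that needs proof. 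Two unipotent elements of $Z_H(t')$ that are conjugate in $H$ need not be conjugate in $Z_H(t')$, and nothing in your sketch rules this out. (Note also that $x\notin L=Z_H(h_x(\C^\times))^\circ$, since $h_x(z)\,x\,h_x(z)^{-1}=x^{z^2}$, so the group $Z_H(t')^\circ\cap L$ does not even contain the elements you propose to move.) To push your route through you would need, say, that $h_x$ is an associated cocharacter for both $x$ and $nxn^{-1}$ inside $Z_H(t')^\circ$ and that unipotent elements of a reductive group sharing an associated cocharacter form a single orbit under the centralizer of that cocharacter; neither is established, and the first is not automatic because a priori $\gamma_x(\SL_2(\C))$ need not lie in $Z_H(t')$.

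The paper closes this gap by an external input you do not invoke: it places $t(T^w)^\circ$ and $t'(T^{w'})^\circ$ in the subtorus $T^P$ attached to the minimal parabolic root subsystem supporting $h_x$, identifies $h_x(q^{1/2})\,t\,T^P$ and $h_x(q^{1/2})\,t'\,T^P$ as residual cosets in Opdam's sense (\cite[Prop. B.4]{Opd}), and then applies \cite[Cor. B.5]{Opd} to conclude that $W^\fs$-conjugate residual cosets yield $H$-conjugate pairs $(h_x(q^{1/2})t,x)$ and $(h_x(q^{1/2})t',x)$. That is the step your proposal leaves unproved.
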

\begin{proof}
Suppose that the two $\cG$-representations $\mu^\fs [w,t] = \pi (\Phi,\rho)$ and \\
$\mu^\fs [w',t'] = \pi (\Phi',\rho')$ belong to the 
same L-packet. By definition this means that $\Phi$ and $\Phi'$ are $G$-conjugate. 
Hence they are labelled by the same unipotent class, say $[x]$ with $x \in \mathfrak U^\fs$. 
By choosing suitable representatives we may assume that $\Phi = \Phi'$ and that 
$\{(\Phi,\rho),(\Phi,\rho')\} \subset \Phi (G)_\en^{\fs,[x]}$. Then
\[
\theta (\Phi,\rho,z) = \theta (\Phi,\rho',z) \text{ for all } z \in \C^\times.
\]
Although in general $\theta (\Phi,\rho,z) \neq \widetilde{\theta_z} (w,t)$, they differ only 
by an element of $W^\fs$. Hence $\theta_z [w,t] = \theta_z [w',t']$ for all $z \in \C^\times$.

Conversely, suppose that $[w,t],[w',t']$ fulfill the two conditions of the lemma. Let 
$x \in \mathfrak U^\fs$ be the representative for the unipotent class which labels them.
From Lemma \ref{lem:Bala-Carter} we see that there are
representatives for $[w,t]$ and $[w',t']$ such that $t (T^w)^\circ$ and $t' (T^{w'})^\circ$
centralize $x$. Then 
\[
\widetilde{\theta_z} (w,t) = t \, h_x (z) \quad \text{and} \quad
\widetilde{\theta_z} (w',t') = t' \, h_x (z)
\]
are $W^\fs$ conjugate for all $z \in \C^\times$. As these points depend continuously on $z$
and $W^\fs$ is finite, this implies that there exists a $v \in W^\fs$ such that
\[
v (t \, h_x (z)) = t' \, h_x (z) \quad \text{for all } z \in \C^\times .
\]
For $z = 1$ we obtain $v(t) = t'$, so $v$ fixes $h_x (z)$ for all $z$. 

Consider the minimal parabolic root subsystem $R_P$ of $R (G,T)$ that supports $h_x$. In
other words, the unique set of roots $P$ such that $h_x$ lies in a facet of type
$P$ in the chamber decomposition of $X^* (T) \otimes_\Z \R$. We write 
\[
T^P = \{ t \in T \mid \alpha (t) = 1 \; \forall \alpha \in P \}^\circ . 
\]
Then $t (T^w)^\circ$ and $t' (T^{w'})^\circ$ are subsets of $T^P$ and $v$ stabilizes $T^P$. 
It follows from \cite[Proposition B.4]{Opd} that $h_x (q^{1/2}) t T^P$ and
$h_x (q^{1/2}) t' T^P$ are residual cosets in the sense of Opdam. By the above, these two 
residual cosets are conjugate via $v \in W^\fs$. Now \cite[Corollary B.5]{Opd} says that
the pairs $(h_x (q^{1/2}) t,x)$ and $(h_x (q^{1/2}) t',x)$ are $H$-conjugate. Hence the
associated Langlands parameters are conjugate, which means that $\mu^\fs [w,t]$ and 
$\mu^\fs [w',t']$ are in the same L-packet.
\end{proof}

\begin{cor}\label{cor:properties}
Properties 1--5 from \cite[\S 15]{ABPS1} hold for $\mu^\fs$ as in 
Theorem \ref{split}, with the morphism $\theta_z$ from \eqref{eq:defThetaz}
and the labelling by unipotent classes in $H^\fs$ from \eqref{eq:labelling} 
and Theorem \ref{thm:bijection}.

Together with Theorem \ref{split} this proves the conjecture from \cite{ABPS1} 
for all Bernstein components in the principal series of a split reductive 
$p$-adic group with connected centre, such that the residual characteristic satisfies 
Condition \ref{CC}.
\end{cor}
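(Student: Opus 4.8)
The plan is to go through Properties~1--5 of \cite[\S 15]{ABPS1} one by one and point to the result above that supplies each. The existence of the bijection $\mu^\fs \colon T^\fs \q W^\fs \to \Irr(\cG)^\fs$, and the fact that it carries $T^\fs_\cpt \q W^\fs$ onto the tempered part $\Irr(\cG)^\fs \cap \Irr(\cG)_{\mathrm{temp}}$, are precisely the two assertions of Theorem~\ref{split}. For the naturality/canonicity property one observes that every intermediate map used to build $\mu^\fs$ is canonical once a uniformizer, a Borel $B_H$ and the Bala--Carter representatives $\fU^\fs \subset B_H$ have been fixed: Theorems~\ref{compareParameters} and \ref{Reed} produce canonical bijections, and although $\tilde\mu^\fs$ in Theorem~\ref{thm:bijection} is built by a recursive choice of irreducible constituents, by parts~(1) and~(2) of that theorem it still respects the projections to $T^\fs/W^\fs$ and the labelling of connected components by unipotent classes in $H$ from \eqref{eq:labelling}. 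These choices are exactly the ones permitted in the formulation of \cite{ABPS1}.

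The property relating $\mu^\fs$ to the cuspidal support map is where the correcting cocharacters enter. Combining Theorem~\ref{thm:bijection}(1) (so that central characters are preserved along $T^\fs \q W^\fs \to \Irr(X^*(T)\rtimes W^\fs) \to \Psi(H)_\aff$), Theorem~\ref{compareParameters}, and Theorem~\ref{Reed}(3), the cuspidal support of $\mu^\fs[w,t]$ is the class in $T^\fs/W^\fs$ of $\Phi\big(\varpi_F, Y_{q^{1/2}}\big)$, where $\Phi$ is the Langlands parameter attached to the affine Springer parameter with semisimple part $t$ and unipotent part $x$ in the class labelling the component of $[w,t]$. By \eqref{eqn:Phi} and \eqref{eq:defhx} we have $\Phi(\varpi_F, Y_{q^{1/2}}) = t\, \gamma_x(Y_{q^{1/2}}) = t\, h_x(q^{1/2})$, and $h_x = h_\Phi$; hence the cuspidal support of $\mu^\fs[w,t]$ is $W^\fs\, t\, h_x(q^{1/2}) = \theta_{q^{1/2}}[w,t]$ with $\theta_z$ as in \eqref{eq:defThetaz}, which is the required commutative diagram. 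Lemma~\ref{lem:Bala-Carter} is what makes it possible to take each correcting cocharacter $h_{\mathbf c} = h_x$ with values in $T$ (not merely in a conjugate torus), and the statement is unaffected by the $W^\fs$-ambiguity in the choice of $\gamma_x$ because $\theta_z$ is a map to the ordinary quotient $T^\fs/W^\fs$. Finally, the L-packet property — that $\mu^\fs[w,t]$ and $\mu^\fs[w',t']$ lie in one L-packet exactly when they carry the same unipotent label and $\theta_z[w,t]=\theta_z[w',t']$ for all $z \in \C^\times$ — is the content of Theorem~\ref{Lpackets}. Once all five properties are in hand, the last sentence of the corollary is immediate: Theorem~\ref{split} gives the bijection for every $\fs \in \mathfrak B(\cG,\cT)$ under Condition~\ref{CC}, and this corollary gives the constraints, so the conjecture of \cite{ABPS1} holds throughout the principal series.

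The step I expect to be the main obstacle is the cuspidal-support property: one has to make sure that the non-canonical recursion producing $\tilde\mu^\fs$ does not corrupt the identification of cuspidal supports (this is guaranteed by Theorem~\ref{thm:bijection}(1), but the interplay between the central-character decomposition $R_\Q(X^*(T)\rtimes W^\fs)_{W^\fs t_1}$ used there and the families \eqref{eqn:Phi} must be tracked carefully), and that the Bala--Carter representatives in $\fU^\fs$ can be chosen so that every correcting cocharacter lands in $T$ and descends to a genuine function on the connected components of $T^\fs \q W^\fs$ as in \eqref{eq:defThetaz}. The remaining properties are essentially bookkeeping, transporting the already-established Theorems~\ref{split} and \ref{Lpackets} into the precise language of \cite[\S 15]{ABPS1}.
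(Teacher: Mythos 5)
Your proposal is correct and follows essentially the same route as the paper: the corollary is proved by assembling the already-established results, with Theorem \ref{split} giving the bijection and the tempered/compact correspondence, the definition \eqref{eq:defThetaz} together with Theorems \ref{Reed}.(3), \ref{thm:bijection} and the computation $\Phi(\varpi_F, Y_{q^{1/2}}) = t\,h_x(q^{1/2})$ giving the cuspidal-support compatibility, and Theorem \ref{Lpackets} giving the L-packet property. Your extra care about the recursion in Theorem \ref{thm:bijection} not corrupting central characters, and about Lemma \ref{lem:Bala-Carter} forcing the correcting cocharacters into $T$, is exactly the bookkeeping the paper relies on implicitly.
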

\begin{proof}
Property (1) was shown in Theorem \ref{split}.
By the definition of $\theta_z$ \eqref{eq:defThetaz}, property (4) holds.
Property (3) is a consequence of property (4), in combination with Theorems \ref{Reed}.(3), 
\ref{split} and \ref{thm:bijection}. Property (2) follows from Theorem \ref{split} and
property (3). Property (5) is none other than Theorem \ref{Lpackets}.
\end{proof}


\begin{thebibliography}{99}
\bibitem[ABP1]{ABP1} A.-M. Aubert, P. Baum, R.J. Plymen, The Hecke algebra of a
reductive $p$-adic group: a geometric conjecture, pp. 1--34 in:
\emph{Noncommutative geometry and number theory, Eds: C. Consani and M.
Marcolli}, Aspects of Mathematics {\bf E37}, Vieweg Verlag (2006).
\bibitem[ABP2]{ABP2} A.-M. Aubert, P. Baum, R.J. Plymen,  Geometric structure in
the principal series of the $p$-adic group $G_2$,  
Represent. Theory {\bf 15} (2011), 126--169. 
\bibitem[ABPS]{ABPS1} A.-M. Aubert, P. Baum, R.J. Plymen, M. Solleveld,  
Geometric structure in smooth dual and local Langlands conjecture,
Japanese J. Math. {\bf 9} (2014) 
\bibitem[Car]{Car} R.W. Carter, \emph{Finite groups of Lie type.
Conjugacy classes and complex characters}, Pure and Applied Mathematics,
John Wiley \& Sons, New York NJ, 1985
\bibitem[ChGi]{CG} N. Chriss, V. Ginzburg, \emph{Representation theory and complex
geometry}, Birkh\"auser, 2000.
\bibitem[Kat]{Kat} S.-I. Kato,
A realization of irreducible representations of affine Weyl groups,
Indag. Math. \textbf{45.2} (1983), 193--201.
\bibitem[KaLu]{KL} D.~Kazhdan, G.~Lusztig, Proof of the Deligne-Langlands
conjecture for Hecke algebras, Invent. math. {\bf 87} (1987), 153--215.
\bibitem[Kha]{K} M. Khalkhali, Basic noncommutative geometry, EMS Lecture Series 2009.   
\bibitem[Opd]{Opd} E.M. Opdam, On the spectral decompostion of affine Hecke algebras,
J. Inst. Math. Jussieu {\bf 3} (2004), 531--648.
\bibitem[Ree]{Reed} M.~Reeder, Isogenies of Hecke algebras and a Langlands correspondence
for ramified principal series representations,  Represent. Theory {\bf
6} (2002), 101--126.
\bibitem[Roc]{Roc} A. Roche, Types and Hecke algebras for principal
series representations of split reductive $p$-adic groups, Ann.
scient. \'Ec. Norm. Sup. {\bf 31} (1998), 361--413.
\bibitem[Ser]{Ser} J.-P. Serre,
\emph{Linear representations of finite groups},
Springer Verlag, 1977.
\bibitem[Sho]{Shoji} T.~Shoji, Green functions of reductive groups over a
finite field, PSPM {\bf 47} (1987), Amer. Math. Soc., 289--301.
\bibitem[Sol1]{Sol} M.~Solleveld, On the classification of irreducible
representations of affine Hecke algebras with unequal parameters,
Represent. Theory {\bf 16} (2012), 1--87. 
\bibitem[Sol2]{Sol2} M. Solleveld, Hochschild homology of affine Hecke algebras,
Journal of Algebra {\bf 384} (2013), 1--35.
\end{thebibliography}
\end{document}